\begin{document}

\def\abstractname{\bf Abstract}
\def\dfrac{\displaystyle\frac}
\def\dint{\displaystyle\int}
\def\vec#1{\overset\rightarrow{#1}}
\let\oldsection\section
\renewcommand\section{\setcounter{equation}{0}\oldsection}
\renewcommand\thesection{\arabic{section}}
\renewcommand\theequation{\thesection.\arabic{equation}}
\newtheorem{theorem}{\indent Theorem}[section]
\newtheorem{lemma}{\indent Lemma}[section]
\newtheorem{proposition}{\indent Proposition}[section]
\newtheorem{definition}{\indent Definition}[section]
\newtheorem{remark}{\indent Remark}[section]
\newtheorem{corollary}{\indent Corollary}[section]
\renewcommand{\proofname}{\indent\it\bfseries Proof.}

\title{\LARGE\bf
Global Strong Solutions to Incompressible Nematic Liquid Crystal Flow
\\
\author{Jinkai Li$^1$
\thanks{Corresponding author. Email: {\it jkli@math.cuhk.edu.hk}}
\\
{\it \small $^1$  The Institute of Mathematical Sciences, The Chinese University of Hong Kong, Hong Kong}
 } }

\maketitle

\begin{abstract}
In this paper, we consider the Dirichlet problem of inhomogeneous incompressible nematic liquid crystal equations in bounded smooth domains of two or three dimensions. We prove the global existence and uniqueness of strong solutions with initial data being of small norm but
allowed to have vacuum. More precisely, for two dimensional case, we only require that the basic energy $\|\sqrt{\rho_0}u_0\|_{L^2}^2+\|\nabla d_0\|_{L^2}^2$ is small, while for three dimensional case, we ask for the smallness of the production of the basic energy and the quantity $\|\nabla u_0\|_{L^2}^2+\|\nabla^2d_0\|_{L^2}^2$. Our efforts mainly center on the establishment of the time independent a priori estimate on local strong solutions. Taking
advantage of such a priori estimate, we extend the local strong solution to the whole time, obtaining the global strong solution.
\end{abstract}

{\bf Keywords}: existence and uniqueness; global strong solutions; liquid crystal.

\section{Introduction}\label{sec1}

We consider the following hydrodynamic system modeling the flow of
nematic liquid crystal materials
\begin{align}
&\rho_t+\textmd{div}(\rho u)=0,\label{1.1} \\
&\rho(u_t+(u\cdot\nabla)u)-\nu\Delta u+\nabla
p=-\lambda\textmd{div}(\nabla d\odot\nabla d),\label{1.2}\\
&\textmd{div}u=0,\label{1.3}\\
&d_t+(u\cdot\nabla )d=\gamma(\Delta d+|\nabla d|^2d)\label{1.4}
\end{align}
in $\Omega\times(0,\infty)$, where $\Omega$ is a bounded domain with
smooth boundary in $\mathbb R^N$ $(N=2,3)$. Here
$u:\Omega\times(0,\infty)\rightarrow\mathbb R^N$ represents the
velocity field of the flow, $d:\Omega\times(0,\infty)\rightarrow
\mathcal{S}^2$, the unit sphere in $\mathbb R^3$, represents the
macroscopic molecular orientation of the liquid crystal material,
$\rho:\Omega\times(0,\infty)\rightarrow[0,\infty)$ and
$p:\Omega\times(0,\infty)\rightarrow\mathbb R$ are scalar functions,
respectively, denoting the density of the fluid and the pressure
arising from the usual assumption of incompressibility
$\textmd{div}u=0$. The positive constants $\nu,\lambda$ and $\gamma$
represent viscosity, the competition between kinetic energy and
potential energy, and microscopic elastic relaxation time or the
Dehorah number for the molecular orientation field, respectively.
The symbol $\nabla d\odot\nabla d$, which exhibits the property of
the anisotropy of the material, denotes the $N\times N$ matrix whose
$(i,j)$-th entry is given by $\partial_i d\cdot\partial_jd$ for
$1\leq i,j\leq N$.

Noticing that
$$
\textmd{div}(\nabla d\odot\nabla d)=\Delta d\cdot\nabla d+\nabla\left(\frac{|\nabla d|^2}{2}\right),
$$
one can rewrite equation \ref{1.2} as
\begin{equation}\label{1.2-1}
\rho(u_t+(u\cdot\nabla)u)-\nu\Delta u+\nabla
\left(p+\frac{\lambda}{2}|\nabla d|^2\right)=-\lambda\Delta d\cdot\nabla d.
\end{equation}

System (\ref{1.1})--(\ref{1.4}) is a simplified version of the
Ericksen-Leslie model, which reduces to the Ossen-Frank model in the
static case, for the hydrodynamics of nematic liquid crystals
developed by Ericksen \cite{E1}, \cite{E2} and Leslie \cite{LE} in
the 1960's. Both the full Ericksen-Leslie model and the simplified
version are the macroscopic continuum description of the time
evolution of the materials under the influence of both the flow
velocity field $u$ and the microscopic orientation configurations
$d$ of rod-like liquid crystals. A brief account of the
Ericksen-Leslie theory and the derivations of several approximate
systems can be found in the appendix of \cite{LL1}. For more details
of physics, we refer the readers to the two books of Gennes-Prost
\cite{GP} and Chandrasekhar \cite{CH}. Though the above system is a
simplified version of the full Ericksen-Leslie system, it still
remains the most important mathematical structures as well as most
of the essential difficulties of the original Ericksen-Leslie
system.

In the homogeneous case, i.e. $\rho\equiv C$, Lin-Lin \cite{LL1,LL2}
initiated the mathematical analysis of (\ref{1.2})--(\ref{1.4}) in
the 1990's. More precisely, they considered in \cite{LL1} the Leslie
system of variable length, that is replacing $|\nabla d|^2d$ by the
Ginzburg-Landau type approximation term
$\frac{1-|d|^2}{\varepsilon^2}d$ to relax the nonlinear constraint
$|d|=1$, and proved the existence of global weak solutions in
dimension two or three. They also obtain the unique existence of
global classical solutions in dimension two or in dimension three
with $\nu$ large enough. Furthermore, they proved in \cite{LL2} the
partial regularity theorem for suitable weak solutions, similar to
the classical theorem by Caffarelli-Kohn-Nirenberg \cite{CKN} for
the Navier-Stokes equation. A preliminary analysis of the asymptotic
behavior of global classical solutions was also given in \cite{LL1}.
More precise asymptotic behavior of classical solutions can be found
in Wu \cite{Wu}, in particular, he provided an estimate on the
convergence rate in dimension two. However, as pointed out in
\cite{LL1,LL2}, both the estimates and arguments in these two papers
depend on $\varepsilon$, and it's a challenging problem to study the
convergence as $\varepsilon$ tends to zero. The two dimensional case
is comparatively easier, in fact Hong \cite{Hong} obtains the convergence as $\varepsilon$ goes to zero up to the first singular time. Such convergence
problem in dimension three is still open. Alternatively, one can
establish the existence of global weak solutions directly to the
system (\ref{1.2})--(\ref{1.4}) but for the Ginzburg-Landau
approximate system. Recently, Lin-Lin-Wang \cite{LLW}
proved the global existence of weak solution to the system
(\ref{1.2})--(\ref{1.4}) in dimension two, and obtained the
regularity and asymptotic behavior of the weak solutions they
established. The uniqueness of such weak solution was later proven
in \cite{LW}. For three dimensional case, the local or global
existence of weak solutions is still unclear in the present.

In the non-homogeneous case, i.e. the density dependent case, the global existence of weak solutions to the system (\ref{1.1})--(\ref{1.4}) with
$|\nabla d|^2d$ being replaced by $\frac{1-|d|^2}{\varepsilon^2}d$, the
Ginzburg-Laudan type approximation term, is established in \cite{Tan1,Tan2} and
\cite{LiuX} for
each $\varepsilon>0$. They cannot get the uniform estimates with
respect to $\varepsilon>0$, and therefore cannot take the limitation
$\varepsilon\rightarrow0$. It's also a challenging problem to study
the convergence as $\varepsilon$ tends to zero for the
non-homogeneous case. If the initial data gains more regularities, one can expect to prove the existence of more regular solutions. In fact, Wen and
Ding \cite{WD} obtain the local existence and uniqueness of the
strong solutions to system (\ref{1.1})--(\ref{1.4}) with initial
density being allowed to have vacuum. If the initial data is small or satisfies some geometric condition, one can obtain the global existence results: global existence of strong solutions in three dimensions with small initial data are obtained by Li and Wang in \cite{LIXL1} for constant density case, Li and Wang in \cite{LIX2} for nonconstant but positive density case, and Ding, Huang and Xia in \cite{DHXIA} for nonnegative density case; global existence of strong and weak solutions in two dimensions with large initial data is obtained by Li \cite{LIJ} under the condition that the third component of the initial direction filed is away from zero.

In the present paper, we consider the global existence of strong solutions to the liquid crystal equations. More precisely, we establish the
global existence of strong solutions to the non-homogeneous system
(\ref{1.1})--(\ref{1.4}), coupled with the following initial and
boundary conditions:
\begin{align}
&(\rho,u,d)|_{t=0}=(\rho_0,u_0,d_0), \quad\mbox{with }
|d_0|=1,\quad\textmd{div}u_0=0,\quad\mbox{ and
}u_0|_{\partial\Omega}=0,\label{1.5} \\
&u(x,t)=0,\qquad d(x,t)=d_0^*,\quad\mbox{for
}(x,t)\in\partial\Omega\times(0,\infty),\label{1.6}
\end{align}
where $d_0^*$ is a given unit constant vector and $\rho_0(x)$ a
given nonnegative function being allowed to vanish on some subset of
$\Omega$. Compared with the approximation problem, the term $|\nabla d|^2d$
in (\ref{1.4}) brings us some new difficulties, for example, one can not obtain the a priori $L^2$ estimates on $\Delta d$ from the basic energy identity. System
(\ref{1.1})--(\ref{1.4}) can be viewed as Navier-Stokes equations
coupling the heat flow of harmonic maps. Since the strong solution
of a harmonic map can be blow up in finite time \cite{Chang}, one
cannot expect that (\ref{1.1})--(\ref{1.6}) have a global strong
solution with general initial data. Therefore, we consider the case
that the initial data is of small norm but the initial density
$\rho_0$ is allowed to have vacuum.

Throughout this paper, for any $1\leq p\leq \infty$, we denote by
$\|u\|_p$ the $L^p$ norm of $u$ for any $u\in L^p(\Omega)$. Using
this notation, we can state the main result of this paper as
follows:

\begin{theorem}\label{theorem1}
Let $\Omega$ be a bounded smooth domain in $\mathbb{R}^N$ $(N=2,3)$. Assume that $\rho_0\in H^1(\Omega)\cap
L^\infty(\Omega)$, $0\leq\rho_0(x)\leq\bar\rho$ in $\Omega$, $u_0\in
H^2(\Omega)\cap H^1_0(\Omega)$ with $\emph{\textmd{div}}u_0=0$ in
$\Omega$, $d_0\in H^3(\Omega)$ with $|d_0|=1$ and
$d_0=d_0^*$ on $\partial\Omega$, for some positive constant $\bar\rho$ and a constant unit vector $d_0^*$,
and the following compatible
condition is valid
$$
-\nu\Delta u_0-\nabla p_0-\lambda\emph{\textmd{div}}(\nabla
d_0\odot\nabla d_0)=\sqrt\rho_0g_0
$$
in $\Omega$ for $(p_0,g_0)\in H^1(\Omega)\times L^2(\Omega)$.

Then there is a positive constant $\varepsilon_0$ depending only on $\nu$, $\lambda$, $\gamma$, $\bar\rho$ and $\Omega$, such that if
\begin{eqnarray*}
&&\|\sqrt{\rho_0}u_0\|_2^2+\|\nabla d_0\|_2^2<\varepsilon_0 \mbox{ for }N=2,\\
&&\mbox{ or }(\|\sqrt{\rho_0}u_0\|_2^2+\|\nabla d_0\|_2^2)(\|\nabla u_0\|_2^2+\|\nabla^2d_0\|_2^2)<\varepsilon_0\mbox{ for }N=3,
\end{eqnarray*}
then initial boundary value problem (\ref{1.1})--(\ref{1.6}) has a
unique global strong solution $(\rho,u,p,d)$ satisfying
\begin{align*}
&\rho\in L_{loc}^\infty([0,\infty);H^1(\Omega))\cap
L^{\infty}(0,\infty;L^\infty(\Omega)),\qquad\rho_t\in
L_{loc}^\infty([0,\infty);L^2(\Omega)),\\
&u\in L^\infty(0,\infty;H^2(\Omega)\cap H_0^1(\Omega))\cap
L^2(0,\infty;W^{2,6}(\Omega)),\\
&u_t\in L^2(0,\infty;H_0^1(\Omega)),\qquad \sqrt\rho u_t\in
L^\infty(0,\infty;L^2(\Omega)),\\
&p\in L^\infty(0,\infty;H^1(\Omega))\cap L^2(0,T;W^{1,6}(\Omega)),\\
&d\in W^{4,2}_2(Q)\cap L^\infty(0,\infty;H^3(\Omega)),\qquad d_t\in
L^\infty(0,\infty;H_0^1(\Omega)),\qquad|d|=1,
\end{align*}
where $Q=\Omega\times(0,\infty)$.
\end{theorem}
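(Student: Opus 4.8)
The plan is to establish Theorem \ref{theorem1} via the standard continuation argument: the local existence and uniqueness of strong solutions is already available from Wen--Ding \cite{WD}, so the entire content is a \emph{time-independent} a priori estimate that lets one iterate the local theory up to $T=\infty$. I would first fix a suitable norm functional of the solution on $[0,T]$ --- something like
$$
\Phi(T)=\sup_{0\le t\le T}\bigl(\|\nabla u\|_2^2+\|\nabla^2 d\|_2^2\bigr)(t)+\int_0^T\bigl(\|\sqrt\rho u_t\|_2^2+\|\nabla^2u\|_2^2+\|\nabla d_t\|_2^2+\|\nabla^3 d\|_2^2\bigr)\,dt
$$
(with possibly a few more terms, and in 2D one can be more generous since only the basic energy need be small) --- and then run a bootstrap: assuming $\Phi(T)$ lies below a threshold, prove that in fact $\Phi(T)$ is controlled by the (small) initial data alone, hence the threshold is never attained and $T$ can be pushed to infinity. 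The key structural fact I would exploit is the rewritten momentum equation \eqref{1.2-1}, which isolates the forcing $-\lambda\Delta d\cdot\nabla d$; together with the fact that $|d|=1$ is propagated (so $\Delta d\cdot\nabla d$ can be handled using $d\cdot\Delta d=-|\nabla d|^2$), this keeps the coupling terms structurally parallel to the incompressible inhomogeneous Navier--Stokes nonlinearity.

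The estimates would be carried out in the following order. First, the basic energy law: multiply \eqref{1.2-1} by $u$, the $d$-equation \eqref{1.4} (in the form $d_t+(u\cdot\nabla)d-\gamma\Delta d=\gamma|\nabla d|^2 d$) by $-\Delta d$, and add; the transport-term cross-cancellations (using $\mathrm{div}\,u=0$ and $u|_{\partial\Omega}=0$) give
$$
\frac{d}{dt}\Bigl(\frac{1}{2}\|\sqrt\rho u\|_2^2+\frac{\lambda}{2}\|\nabla d\|_2^2\Bigr)+\nu\|\nabla u\|_2^2+\lambda\gamma\|\Delta d\|_2^2=\lambda\gamma\int|\nabla d|^2 d\cdot\Delta d\,dx,
$$
and the right side equals $-\lambda\gamma\int|\nabla d|^4\,dx\le 0$ after using $d\cdot\Delta d=-|\nabla d|^2$ --- this is the crucial sign that replaces the Ginzburg--Landau dissipation and gives $\|\sqrt\rho u\|_2^2+\|\nabla d\|_2^2$ nonincreasing plus the full space-time bound on $\nabla u$ and $\Delta d$. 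Second, the first-order-in-time estimate: differentiate \eqref{1.2-1} in $t$, test with $u_t$; test the differentiated $d$-equation with $d_t$ (or $-\Delta d_t$); integrate by parts and absorb the dangerous terms like $\int\rho u_t\cdot\nabla u\cdot u_t$ and $\int\nabla d_t\cdot\nabla d\,d_t$ via Ladyzhenskaya/Gagliardo--Nirenberg inequalities, and Sobolev embedding in the form $\|f\|_4^2\lesssim\|f\|_2\|\nabla f\|_2$ ($N=2$) or $\|f\|_4^2\lesssim\|f\|_2^{1/2}\|\nabla f\|_2^{3/2}$ ($N=3$) --- here the smallness hypothesis on $\|\sqrt{\rho_0}u_0\|_2^2+\|\nabla d_0\|_2^2$ (and in 3D also $\|\nabla u_0\|_2^2+\|\nabla^2 d_0\|_2^2$) is what makes the quadratic/cubic right-hand sides absorbable into the dissipation uniformly in time. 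Third, elliptic regularity: treat \eqref{1.2-1} at each fixed $t$ as a stationary Stokes system with right-hand side $\rho u_t+\rho(u\cdot\nabla)u+\lambda\Delta d\cdot\nabla d$ to bound $\|u\|_{H^2}$, $\|\nabla p\|_2$ and, bootstrapping, $\|u\|_{W^{2,6}}$; treat \eqref{1.4} as a heat equation to upgrade $d$ to $H^3$ and $W^{4,2}_2$; finally the continuity equation \eqref{1.1} gives the $H^1$ bound on $\rho$ and the $L^2$ bound on $\rho_t$ (these last two are only local in time, matching the statement's $L^\infty_{loc}$).

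The main obstacle --- and the reason the $N=3$ hypothesis has that product form while $N=2$ needs only the basic energy --- is closing the loop between the small quantity (basic energy, which decays but whose smallness must persist) and the higher-order quantity $\|\nabla u\|_2^2+\|\nabla^2 d\|_2^2$, which a priori is merely finite. In two dimensions the Ladyzhenskaya inequality $\|f\|_4^2\lesssim\|f\|_2\|\nabla f\|_2$ is exactly borderline: every troublesome nonlinear term can be estimated by (small basic energy)$\times$(dissipation) plus lower-order pieces, so the bootstrap closes with only $\varepsilon_0$ small. In three dimensions the analogous estimate costs a power $\|\nabla f\|_2^{3/2}$, so one cannot pay with the basic energy alone; instead one must run a coupled Gronwall argument on the combined functional and show that the product $(\text{basic energy})(\|\nabla u\|_2^2+\|\nabla^2 d\|_2^2)$ stays below $\varepsilon_0$ for all time --- typically by proving a differential inequality of the form $\frac{d}{dt}A+B\lesssim A\cdot B+(\text{small})\cdot B$ where $A$ is the higher-order energy and $B$ the higher-order dissipation, then using the decay/smallness of the basic energy to keep the coefficient in front of $B$ small. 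Verifying that all boundary terms (notably from $d=d_0^*$ on $\partial\Omega$, which forces working with $d-d_0^*$ and checking its compatibility with the Laplacian) vanish or are controlled, and that the constants genuinely depend only on $\nu,\lambda,\gamma,\bar\rho,\Omega$, is the technical heart of the argument.
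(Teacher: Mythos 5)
There is a genuine gap, and it sits at the very first step. Your basic energy law has the wrong sign: testing \eqref{1.4} with $-\lambda\Delta d$ produces $-\lambda\gamma\int_\Omega|\nabla d|^2\,d\cdot\Delta d\,dx$ on the right, and since $d\cdot\Delta d=-|\nabla d|^2$ this equals $+\lambda\gamma\int_\Omega|\nabla d|^4\,dx\ge 0$, not $-\lambda\gamma\int_\Omega|\nabla d|^4\,dx\le 0$ as you claim. Consequently the basic energy identity reads
\begin{equation*}
\frac{1}{2}\frac{d}{dt}\bigl(\|\sqrt\rho u\|_2^2+\|\nabla d\|_2^2\bigr)+\|\nabla u\|_2^2+\|\Delta d\|_2^2=\|\nabla d\|_4^4 ,
\end{equation*}
and because $|\nabla d|^2=-d\cdot\Delta d\le|\Delta d|$ pointwise, the term $\|\nabla d\|_4^4$ exactly eats the dissipation $\|\Delta d\|_2^2$: what survives is only $\sup_t(\|\sqrt\rho u\|_2^2+\|\nabla d\|_2^2)+2\int_0^T\|\nabla u\|_2^2\,dt\le C_0$, with \emph{no} free space--time bound on $\Delta d$. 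This is precisely the difficulty the paper singles out in the introduction as distinguishing the $|\nabla d|^2d$ nonlinearity from its Ginzburg--Landau regularization; the bound $\int_0^t\|\nabla^2d\|_2^2\,ds\le CC_0$ is recovered only later (Lemma \ref{lemE1}) by an absorption argument that uses the smallness of $C_0$ together with $\|\nabla d\|_4^4\lesssim\|\nabla d\|_2^2\|\nabla^2 d\|_2^2$ (2D) or $\|\nabla d\|_2\|\nabla^2d\|_2^3$ (3D). Your subsequent higher-order estimates implicitly lean on having $\Delta d\in L^2_tL^2_x$ from the start, so this is not a cosmetic slip.

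A second missing idea concerns the 2D case. Your assertion that in two dimensions ``every troublesome nonlinear term can be estimated by (small basic energy) times (dissipation)'' via Ladyzhenskaya fails for the convection term $\int_0^t\int_\Omega\rho|u|^2|\nabla u|^2\,dx\,ds$: with only $\|\sqrt{\rho_0}u_0\|_2^2+\|\nabla d_0\|_2^2$ small (and $\|\nabla u_0\|_2^2+\|\nabla^2d_0\|_2^2$ possibly large), this term leaves an uncontrolled factor $\int_0^t\|u\|_\infty^2\|\nabla u\|_2^2\,ds$ in the inequality for $E_1$, and $H^1\not\hookrightarrow L^\infty$ in 2D. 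The paper closes this loop with the logarithmic Sobolev inequality of Lemma \ref{lemL.2} (from \cite{HUANGWANG}), which gives $\int_0^t\|u\|_\infty^2\,ds\le C\ln(1+\int_0^t\|\nabla^2u\|_2^2\,ds)$, followed by a Gronwall argument in which the logarithm is beaten by the small power $(1+\int_0^t\|\nabla^2u\|_2^2\,ds)^{CC_0^{1/2}}$. Without this (or an equivalent device), your 2D bootstrap does not close under the stated hypothesis. The remaining architecture of your proposal --- local existence from \cite{WD}, a time-independent functional $\Phi(T)$ essentially identical to the paper's $E_1$, Stokes and elliptic regularity for the $H^2/H^3$ upgrades, the continuity-method dichotomy for $N=3$, and the vanishing-vacuum approximation $\rho_0^j=\rho_0+1/j$ --- does match the paper, but the two points above are where the actual work lies.
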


We now comment on the analysis of this paper. Since the local existence of strong solutions to system (\ref{1.1})--(\ref{1.6}) has been proven in \cite{WD},
to establish the global existence result, we only need to extend the local
solution to the global one. For this aim, recalling that vacuum is allowed in our paper, we need to establish some a priori estimates on local strong solutions,
which is independent of the existence time interval and the lower bound of the density. The first key estimate of this paper is the estimate on $E_1(t)$ (see Lemma \ref{lemL.1} for the definition), which controls the $L^\infty(0,T; H^1)$ norm of the velocity $u$ and the $L^\infty(0, T; H^2)$ norm of the direction field $d$. Via energy estimates, we obtain a polynomial inequality of $E_1(t)$ with small coefficients (see Lemma \ref{lemL.1} for the detail), with an additional term involving $\int_0^t\|u\|_\infty\|\nabla u\|_2^2ds$ if $N=2$. This additional term results from the assumption imposed on the initial data that only the initial basic energy is small for $N=2$, and it disappears if we impose the same assumption for $N=2$ to that for $N=3$. On account of such polynomial inequality on $E_1(t)$, one can use continuity argument to derive the bound of $E_1(t)$ if $N=3$, while for $N=2$, we can employ a logarithmic type Sobolev inequality to obtain the bound of $E_1(t)$. As long as we obtain the estimate on $E_1(t)$, the next step is to do higher order estimates, i.e. the estimates on $E_2(t)$, which controls the $L^\infty(0, T; H^2)$ norm of the velocity $u$ and the $L^\infty(0, T; H^3)$ norm of the direction fields $d$. Similar to the situation encountered before, the arguments are different for $N=2$ and those for $N=3$, and the cause is still the different assumptions imposed on the initial data for $N=2$ and $N=3$. In fact, if we use the same approach to that used for $N=3$ to the deal with the case $N=2$, we will encounter a term $\int_0^t\|\nabla d|_2^2\|u_t\|_2^2ds$, which can not be controlled in terms of $E_1(t)$ under the assumption that only the basic energy of the initial data is small. After obtaining the higher order estimates on local strong solutions, we use the standard approximation approach to establish the global existence of strong solutions, and thus finish the proof.

The rest of this paper is arranged as follows: in Section \ref{sec2}, we state some preliminary lemmas which will be used in the next two sections;
in Section \ref{sec3}, we do the a priori estimates on the local strong solutions, including both the basic energy estimates and the estimates on higher derivatives of $u$ and $d$ independent of the existence time interval and the lower bound of density; in Section \ref{sec4}, taking advantage of the a priori estimates established in Section \ref{sec3}, we prove the global existence and uniqueness of strong solutions by using the standard approximation approach.

Since the exact values of $\nu$, $\lambda$ and $\gamma$ don't play a role, we henceforth assume
$$
\nu=\lambda=\gamma=1
$$
thoughout this paper. We denote
$$
C_0=\|\sqrt{\rho_0}u_0\|_2^2+\|\nabla d_0\|_2^2
$$
the basic energy of the initial data in the rest of this paper.

\section{Preliminaries}\label{sec2}

In this section, we give some useful lemmas which will be used in the rest of this paper.

The following result is quite standard (as a matter of fact, it's a straightforward generalization of the one presented in \cite{Lad}).

\begin{lemma}\label{est on transport} Let $\Omega$ be a Lipschitz domain of $\mathbb R^N$ and $v\in L^1(0,T;Lip)$ be a solenoidal vector-field such that
$v\cdot n=0$ on $\partial\Omega$, where $n$ denotes the outward normal vector on $\partial\Omega$. Let $\rho_0\in W^{1,q}(\Omega)$ with $q\in[1,\infty]$. Then
equation
\begin{equation*}
\left\{
\begin{array}{l}
\rho_t+\emph{\textmd{div}}(\rho v)=0\qquad\mbox{ in }\Omega,\\
\rho|_{t=0}=\rho_0\qquad\mbox{ in }\Omega
\end{array}
\right.
\end{equation*}
has a unique solution in $L^\infty(0,T;W^{1,\infty}(\Omega))\cap C([0,T];\cap_{1\leq r<\infty}W^{1,r}(\Omega))$ if $q=\infty$ and in $C([0,T];W^{1,q}(\Omega))$ if $1\leq q<\infty$.

Besides, the following estimate holds true
$$
\|\rho(t)\|_{W^{1,q}(\Omega)}\leq e^{\int_0^t\|\nabla v(\tau)\|_\infty d\tau}\|\rho_0\|_{W^{1,q}(\Omega)}
$$
for any $t\in[0,T]$. If in addition $\rho$ belongs to $L^p$ for some $p\in[1,\infty]$, then
$$
\|\rho(t)\|_p=\|\rho_0\|_p
$$
for all $t\in[0,T]$. Finally, if $\rho_0(x)\geq\delta$ in $\Omega$ for some positive constant $\delta$, then $\rho(x,t)\geq\delta$ for all $(x,t)\in\Omega\times[0,T]$.
\end{lemma}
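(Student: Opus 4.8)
The plan is to solve the problem explicitly by the method of characteristics and then read off every asserted property from the flow map. Since $v$ is solenoidal, $\rho_t+\operatorname{div}(\rho v)=\rho_t+v\cdot\nabla\rho$, so the equation is a pure transport equation and its solution must be constant along the trajectories of $v$. First I would fix, for each $s\in[0,T]$ and $x\in\overline\Omega$, the flow $\tau\mapsto X(\tau;s,x)$ solving $\partial_\tau X(\tau;s,x)=v(\tau,X(\tau;s,x))$ with $X(s;s,x)=x$. Because $v\in L^1(0,T;\mathrm{Lip})$, this is a Carathéodory ODE whose spatial Lipschitz constant is integrable in time, so existence and uniqueness hold in the class of absolutely continuous curves; the hypothesis $v\cdot n=0$ on $\partial\Omega$, together with the Lipschitz bound, keeps trajectories issued from $\overline\Omega$ inside $\overline\Omega$, so $X(t;s,\cdot)$ is a bi-Lipschitz homeomorphism of $\overline\Omega$ with inverse $X(s;t,\cdot)$, and Liouville's formula with $\operatorname{div}v=0$ forces the Jacobian $\det\nabla_xX(t;s,x)\equiv 1$.

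Next I would set $\rho(x,t):=\rho_0(X(0;t,x))$ and check that it solves the initial boundary value problem: for $\rho_0\in C^1(\overline\Omega)$ this is a direct computation, and for general $\rho_0\in W^{1,q}(\Omega)$ I would approximate $\rho_0$ by smooth functions and pass to the limit using the a priori bounds below. That $\rho\in C([0,T];W^{1,q})$ when $q<\infty$ follows from the continuity of $t\mapsto X(0;t,\cdot)$ in the appropriate topology together with the uniform estimates; when $q=\infty$ one only obtains weak-$*$ continuity into $W^{1,\infty}$—hence the stated $L^\infty(0,T;W^{1,\infty})$ regularity—while strong continuity survives in $W^{1,r}$ for every finite $r$.

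For the quantitative bounds, the change of variables $y=X(0;t,x)$, whose Jacobian is $1$, gives $\|\rho(t)\|_p=\|\rho_0\|_p$ at once, and $\rho(x,t)=\rho_0(X(0;t,x))\geq\delta$ whenever $\rho_0\geq\delta$ on $\Omega$ since $X(0;t,x)\in\overline\Omega$. To estimate the gradient, differentiating the flow equation in $x$ yields $\partial_\tau\nabla_xX=(\nabla v)(\tau,X)\,\nabla_xX$ with $\nabla_xX(t;t,x)=I$, so Gronwall's inequality (integrating from $\tau=t$ down to $\tau=0$) gives $\|\nabla_xX(0;t,\cdot)\|_\infty\leq e^{\int_0^t\|\nabla v(\tau)\|_\infty\,d\tau}$; since $\nabla_x\rho(x,t)=(\nabla_xX(0;t,x))^\top(\nabla\rho_0)(X(0;t,x))$ and the Jacobian is $1$, the same change of variables produces $\|\nabla\rho(t)\|_q\leq e^{\int_0^t\|\nabla v\|_\infty\,d\tau}\|\nabla\rho_0\|_q$, which together with $\|\rho(t)\|_q=\|\rho_0\|_q$ yields the claimed $W^{1,q}$ estimate.

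Finally, for uniqueness, the difference $w$ of two solutions solves $w_t+v\cdot\nabla w=0$ with $w|_{t=0}=0$; multiplying by $q|w|^{q-2}w$ (using $w$ itself when $q=\infty$, restricting to a bounded piece or invoking the $W^{1,r}$ regularity if needed), integrating over $\Omega$, and discarding the convective term via $\operatorname{div}v=0$ and $v\cdot n=0$ gives $\frac{d}{dt}\|w(t)\|_q^q=0$, so $w\equiv0$. The main obstacle is not this energy argument but the very first step: constructing the flow together with its bi-Lipschitz inverse up to a merely Lipschitz boundary for a velocity field with only $L^1$-in-time regularity, and establishing the measure-preserving property rigorously—this is precisely the point where care is needed, and the reason the statement is a generalization of the classical construction in \cite{Lad}.
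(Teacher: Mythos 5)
The paper does not prove this lemma; it simply cites it as a standard result generalizing the construction in \cite{Lad}, and your method-of-characteristics argument is exactly the classical proof being invoked there. Your derivation is correct in all its steps (backward flow, unit Jacobian from $\mathrm{div}\,v=0$, Gronwall for the Lipschitz bound on $\nabla_xX$, change of variables for the $L^p$ and $W^{1,q}$ estimates, and the energy or characteristics argument for uniqueness), and you rightly flag the only genuinely delicate points --- invariance of a merely Lipschitz domain under the flow and the measure-preserving property for a velocity that is only $L^1$ in time --- as the technical content hidden behind the word ``standard.''
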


We will frequently use the following two lemmas, which state the elliptic $L^q$ estimates on Laplace and Stokes equations.

\begin{lemma} (See \cite{Krylov})\label{est on ellip}
Let $\Omega$ be a $C^{k+2}$ bounded domain in $\mathbb R^N, N\geq2$, $k\geq0$ is an integer.
Then we have
$$
\|u\|_{W^{k+2,q}(\Omega)}\leq C(\|\Delta u\|_{W^{k,q}(\Omega)}+\|u\|_{L^q(\Omega)}+\|g\|_{W^{k+2,q}(\Omega)})
$$
for any $u\in W^{k+2,q}(\Omega)$ and $g\in W^{k+2, q}(\Omega)$ with $u|_{\partial\Omega}=g$, $1<q<\infty$,
where $C$ is a positive constant depending only on $N,q$ and $\Omega$.
\end{lemma}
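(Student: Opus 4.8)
The plan is to treat this as the classical Calder\'on--Zygmund a priori estimate for the Dirichlet Laplacian, proved by localization and freezing of coefficients; the citation to Krylov signals that the result is standard, so I will reconstruct the standard argument. First I would reduce to homogeneous boundary data. Setting $w=u-g$, we have $w|_{\partial\Omega}=0$ and $\Delta w=\Delta u-\Delta g$, so that $\|\Delta w\|_{W^{k,q}}\leq\|\Delta u\|_{W^{k,q}}+C\|g\|_{W^{k+2,q}}$ and $\|u\|_{W^{k+2,q}}\leq\|w\|_{W^{k+2,q}}+\|g\|_{W^{k+2,q}}$. Hence it suffices to establish the inequality $\|w\|_{W^{k+2,q}}\leq C(\|\Delta w\|_{W^{k,q}}+\|w\|_{L^q})$ for every $w\in W^{k+2,q}(\Omega)$ with zero trace; adding the boundary terms back and using $\|w\|_{L^q}\leq\|u\|_{L^q}+C\|g\|_{W^{k+2,q}}$ then yields the claimed estimate.

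The heart of the matter is the case $k=0$ with zero Dirichlet data. I would cover $\bar\Omega$ by finitely many interior balls $B_i\Subset\Omega$ and boundary charts $V_j$ on which the $C^2$ boundary is flattened by a diffeomorphism $\Phi_j$ onto a piece of the half-space $\mathbb R^N_+$, together with a subordinate partition of unity $\{\zeta_\ell\}$. On an interior piece, $\zeta_i w$ has compact support and the whole-space Calder\'on--Zygmund estimate (equivalently, the $L^q$-boundedness of the Riesz transforms, i.e. of the second derivatives of the Newtonian potential) gives $\|D^2(\zeta_i w)\|_{L^q}\leq C\|\Delta(\zeta_i w)\|_{L^q}$; expanding $\Delta(\zeta_i w)=\zeta_i\Delta w+2\nabla\zeta_i\cdot\nabla w+w\Delta\zeta_i$ produces $\|D^2 w\|_{L^q(B_i')}\leq C(\|\Delta w\|_{L^q}+\|w\|_{W^{1,q}})$. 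On a boundary piece, flattening turns $\Delta$ into a second-order operator $L=\sum_{r,s}a_{rs}(y)\partial_{rs}+(\text{lower order})$ with continuous coefficients (continuity of the $a_{rs}$ being exactly what $\Omega\in C^2$ provides) and turns the boundary condition into zero Dirichlet data on $\{y_N=0\}$. Freezing the coefficients at the chart center reduces $L$ to a constant-coefficient elliptic operator, which after a linear change of variables becomes the half-space Laplacian, for which the zero-Dirichlet estimate follows by extending $w$ oddly across $\{y_N=0\}$ and invoking the whole-space bound. The frozen-coefficient error $\sum_{r,s}(a_{rs}(y)-a_{rs}(y_0))\partial_{rs}(\zeta_j w)$ is made small by shrinking the chart (uniform continuity of $a_{rs}$) and absorbed into the left-hand side.

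Summing the local estimates over the partition of unity gives the global bound $\|w\|_{W^{2,q}}\leq C(\|\Delta w\|_{L^q}+\|w\|_{W^{1,q}})$, and the first-order term is removed by the interpolation (Ehrling) inequality $\|w\|_{W^{1,q}}\leq\epsilon\|w\|_{W^{2,q}}+C_\epsilon\|w\|_{L^q}$ with $\epsilon$ small, completing the case $k=0$. For general $k$ I would argue by induction: tangential derivatives of order up to $k$ preserve the zero boundary condition and commute with $\Delta$ up to lower-order terms, so applying the $k=0$ estimate to $D^\alpha w$ and then using the equation itself to recover the purely normal derivatives (trading $\partial_N^2$ for $\Delta$ minus tangential second derivatives) bootstraps $W^{2,q}$ control up to $W^{k+2,q}$. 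This is exactly where the $C^{k+2}$ regularity of $\partial\Omega$ enters, guaranteeing that the flattening maps are $C^{k+2}$ and the transformed coefficients lie in $C^k$.

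The main obstacle is the boundary analysis: carrying out the flattening so that the transformed operator is a genuinely small perturbation of a constant-coefficient elliptic operator, and organizing the tangential--normal derivative bookkeeping for $k\geq1$ so that each differentiation step remains within the scope of the half-space zero-Dirichlet estimate. By contrast, the interior Calder\'on--Zygmund bound and the final interpolation absorption are routine once the reflection-based half-space estimate is in hand.
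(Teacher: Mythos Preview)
Your sketch is a correct reconstruction of the standard Calder\'on--Zygmund/localization argument, but note that the paper does not actually prove this lemma: it is stated with a citation to Krylov's book and used as a black box. There is therefore nothing in the paper to compare against beyond the fact that you have supplied what the paper deliberately omits.
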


\begin{lemma} (See \cite{Galdi})\label{est on stok} Let $\Omega$ be a bounded domain in $\mathbb R^N$, $N\geq2$, of class
$C^{m+2}$, $m\geq0$. For any
$$
f\in W^{m,q}(\Omega),\qquad\varphi\in W^{m+2-1/q,q}(\partial\Omega),
$$
$1<q<\infty$, with compatible condition $\int_{\partial\Omega}\varphi\cdot ndS=0,$ there exists one and only one pair $u,p$ such that

(i) $v\in W^{m+2,q}(\Omega)$ and $p\in W^{m+1,q}(\Omega)/\mathbb R$,

(ii) $v,p$ verify the Stokes equation

\begin{equation*}
\left\{
\begin{array}{l}
-\Delta u+\nabla p=f\quad\mbox{ in }\Omega,\\
\emph{\textmd{div }}u=0\quad\mbox{ in }\Omega,\\
u|_{\partial\Omega}=\varphi.
\end{array}
\right.
\end{equation*}
In addition, this solution obeys the inequality
$$
\|v\|_{W^{m+2,q}(\Omega)}+\|p\|_{W^{m+1,q}(\Omega)/{\mathbb R}}\leq C(\|f\|_{W^{m,q}(\Omega)}+\|\varphi\|_{W^{m+2-1/q,q}(\partial\Omega)}),
$$
where $C$ is a positive constant depending only on $N,m,q$ and $\Omega$.

We also need the following local existence result.

\begin{lemma}(See \cite{WD})\label{local existence} Under the conditions stated in Theorem \ref{theorem1}, there is a constant $T_*>0$, such that
for any $T\leq T_*$ system (\ref{1.1})--(\ref{1.6}) has unique solution $(\rho,u,p,d)$ satisfying
\begin{align*}
&\rho\in L^\infty(0,T;H^1(\Omega))\cap
L^{\infty}(0,T;L^\infty(\Omega)),\qquad\rho_t\in
L^\infty(0,T;L^2(\Omega)),\\
&u\in L^\infty(0,T;H^2(\Omega)\cap H_0^1(\Omega))\cap
L^2(0,T;W^{2,6}(\Omega)),\\
&u_t\in L^2(0,T;H_0^1(\Omega)),\qquad \sqrt\rho u_t\in
L^\infty(0,T;L^2(\Omega)),\\
&p\in L^\infty(0,T;H^1(\Omega))\cap L^2(0,T;W^{1,6}(\Omega)),\\
&d\in W^{4,2}_2(Q_T)\cap L^\infty(0,T;H^3(\Omega)),\qquad d_t\in
L^\infty(0,T;H_0^1(\Omega)),\qquad|d|=1,
\end{align*}
where $Q_T=\Omega\times(0,T)$.
\end{lemma}

\end{lemma}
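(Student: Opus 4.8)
The plan is to construct the solution locally in time by a linearization-and-iteration scheme, establish short-time estimates that are uniform in the iteration index, and then pass to the limit. The whole difficulty lies in the possible presence of vacuum: where $\rho$ vanishes the momentum equation degenerates, so one cannot invert $\rho\partial_t$ and must instead control the velocity through $\sqrt\rho\,u_t$. The compatibility hypothesis is precisely what makes the initial value of $\sqrt\rho\,u_t$ an $L^2$ function and thereby permits the key energy estimates to close.

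First I would set up the iteration. Starting from $(\rho^0,u^0,d^0)$ obtained by freezing the initial data in time, and given the $k$-th iterate $(\rho^k,u^k,d^k)$, I would define the next one by solving three decoupled linear problems in order. First, the transport equation $\rho^{k+1}_t+\mathrm{div}(\rho^{k+1}u^k)=0$ with $\rho^{k+1}|_{t=0}=\rho_0$, which by Lemma \ref{est on transport} has a unique solution satisfying $0\le\rho^{k+1}\le\bar\rho$, with preserved $L^p$ norms and an $H^1$ bound governed by $\int_0^t\|\nabla u^k\|_\infty\,ds$. Second, the linear parabolic problem $d^{k+1}_t+(u^k\cdot\nabla)d^{k+1}=\Delta d^{k+1}+|\nabla d^k|^2d^k$ with $d^{k+1}|_{t=0}=d_0$ and $d^{k+1}|_{\partial\Omega}=d_0^*$, solved by standard linear parabolic theory. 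Third, the linear Stokes system $\rho^{k+1}(u^{k+1}_t+(u^k\cdot\nabla)u^{k+1})-\Delta u^{k+1}+\nabla p^{k+1}=-\Delta d^{k+1}\cdot\nabla d^{k+1}$ together with $\mathrm{div}\,u^{k+1}=0$, $u^{k+1}|_{\partial\Omega}=0$ and $u^{k+1}|_{t=0}=u_0$, whose spatial regularity (for both $u^{k+1}$ and $p^{k+1}$) comes from the Stokes estimate of Lemma \ref{est on stok}.

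The core of the argument is a set of a priori bounds on a time interval $[0,T_*]$ with $T_*$ independent of $k$. I would estimate, in the same order, $\|\rho^{k+1}\|_{L^\infty\cap H^1}$ from the transport step; then $\|d^{k+1}\|_{H^3}$ and $\|d^{k+1}_t\|_{H^1}$ by parabolic regularity, using the inductive $H^3$ bound on $d^k$ to dominate the source $|\nabla d^k|^2d^k$; and finally the velocity bounds by testing the momentum equation and its time derivative against $u^{k+1}_t$, which produces control of $\|\sqrt{\rho^{k+1}}\,u^{k+1}_t\|_2$ and $\|\nabla u^{k+1}\|_2$, and then feeding these into the Stokes estimate for the full $H^2$ and $L^2(0,T;W^{2,6})$ bounds. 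The initial value of $u^{k+1}_t$ needed for the time-differentiated estimate is furnished by the compatibility condition, which states exactly that $\sqrt{\rho_0}\,u_t|_{t=0}=g_0\in L^2$. Because every genuinely nonlinear contribution carries a positive power of $t$ or is absorbable by Gronwall, shrinking $T_*$ keeps all iterates inside a fixed ball of the solution space.

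Finally I would prove convergence and close out the remaining claims. Taking differences of consecutive iterates and carrying out an energy estimate in the weaker norms $\|\rho^{k+1}-\rho^k\|_2$, $\|u^{k+1}-u^k\|_2$ and $\|\nabla(d^{k+1}-d^k)\|_2$, I expect a recursive inequality that, after possibly shrinking $T_*$, makes the sequence contractive and hence strongly convergent; combined with the uniform high-norm bounds this yields limits $(\rho,u,p,d)$ in the asserted spaces, with the strong convergence identifying all nonlinear terms, so the limit solves (\ref{1.1})--(\ref{1.6}). To recover $|d|=1$, I would set $\phi=|d|^2-1$ and use $d\cdot\Delta d=\frac12\Delta|d|^2-|\nabla d|^2$ in (\ref{1.4}) to derive $\phi_t+u\cdot\nabla\phi=\Delta\phi+2|\nabla d|^2\phi$ with $\phi|_{t=0}=0$ and $\phi|_{\partial\Omega}=0$, whence a maximum-principle or Gronwall argument forces $\phi\equiv0$. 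Uniqueness follows from the same difference-energy estimate applied directly to two solutions. The main obstacle throughout is the vacuum: with no positive lower bound on $\rho$, all velocity information must be extracted through $\sqrt\rho\,u_t$, so the compatibility condition and the careful ordering density $\to$ director $\to$ velocity are exactly what let the scheme close.
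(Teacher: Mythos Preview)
The paper does not prove either of these lemmas: both are quoted verbatim from external references, the Stokes estimate from Galdi \cite{Galdi} and the local existence result from Wen--Ding \cite{WD}, with no argument supplied. So there is no ``paper's own proof'' to compare against; the author simply invokes the cited results as black boxes and moves on to the a priori estimates in Section~\ref{sec3}.

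Your proposal, by contrast, sketches an actual proof of the local existence lemma via a linearize--iterate--contract scheme, which is indeed the standard route and essentially what \cite{WD} carries out. The ordering density $\to$ director $\to$ velocity, the use of the compatibility condition to give $\sqrt{\rho_0}\,u_t|_{t=0}\in L^2$ meaning, the control of $\sqrt{\rho}\,u_t$ in lieu of $u_t$ to accommodate vacuum, and the recovery of $|d|=1$ via the scalar equation for $|d|^2-1$ are all correct and are the key ingredients. One point worth flagging: in your Stokes step the density $\rho^{k+1}$ may vanish, so the linear problem $\rho^{k+1}u^{k+1}_t-\Delta u^{k+1}+\nabla p^{k+1}=\cdots$ is itself degenerate and its solvability is not entirely standard; in practice one either regularizes by replacing $\rho_0$ with $\rho_0+\delta$ and passes $\delta\to0$ after obtaining $\delta$-independent bounds, or appeals to the existing theory for the degenerate linear Stokes problem. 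Apart from that caveat your outline is sound, but for the purposes of this paper it suffices to cite \cite{WD}.
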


\section{A priori estimates}\label{sec3}
\allowdisplaybreaks
In this section, we concern on the energy estimates on strong
solutions. Let $T>0$ and $(\rho,u,p,d)$ be a strong solution to
(\ref{1.1})--(\ref{1.6}) in $\Omega\times[0,T)$ stated in Lemma \ref{local existence}.
By Lemma \ref{est on transport}, we have
$$
0\leq\rho(x,t)\leq\bar\rho
$$
on $Q_T$. Recalling that $d|_{\partial\Omega}=d_0^*$, then Sobolev embedding theorem implies
$$
\|\nabla^md\|_6^2=\|\nabla^m(d-d_0^*)\|_6^2\leq\|d-d_0^*\|_{H^{m+1}}^2\leq\|\nabla^{m+1}d\|_2^2
$$
for any integer $m\geq 1$ if $N=3$. We will frequently used these facts without any further mentions later.

\begin{lemma}\label{lemL.0} Let $(\rho,u,p,d)$ be a
strong solution to (\ref{1.1})--(\ref{1.6}) in $Q_T$. Then we have
the following energy estimates
\begin{align*}
\sup_{0\leq t\leq T}(\|\sqrt\rho u\|_2^2+\|\nabla
d\|_2^2)+2\int_0^T(\|\nabla u\|_2^2+\|\Delta d\|_2^2)dt  \leq&
C_0+2\int_0^T\|\nabla d\|_4^4dt,
\end{align*}
and
$$
\sup_{0\leq t\leq T}(\|\sqrt\rho u\|_2^2+\|\nabla
d\|_2^2)+2\int_0^T\|\nabla u\|_2^2dt\leq C_0.
$$
\end{lemma}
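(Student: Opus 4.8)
The plan is to establish the two basic energy identities by testing equations \eqref{1.2-1} and \eqref{1.4} against natural multipliers and combining. First I would multiply \eqref{1.2-1} by $u$ and integrate over $\Omega$. Using the continuity equation \eqref{1.1} together with $\mathrm{div}\,u=0$ and $u|_{\partial\Omega}=0$, the convective term $\int\rho(u\cdot\nabla)u\cdot u\,dx$ and the time-derivative term combine into $\frac12\frac{d}{dt}\int\rho|u|^2dx$; the pressure term (now $p+\frac12|\nabla d|^2$) vanishes after integration by parts since $\mathrm{div}\,u=0$ and $u$ vanishes on the boundary; and $-\int\Delta u\cdot u\,dx=\|\nabla u\|_2^2$. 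On the right-hand side we are left with $-\int(\Delta d\cdot\nabla d)\cdot u\,dx$.

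Next I would differentiate the basic Dirichlet energy of $d$: test \eqref{1.4} with $-\Delta d$. Since $d-d_0^*$ vanishes on $\partial\Omega$ and $d_0^*$ is constant, $\partial_t d$ and all spatial derivatives of $d$ behave well on the boundary, so $\int d_t\cdot(-\Delta d)\,dx=\frac12\frac{d}{dt}\|\nabla d\|_2^2$. The diffusion term gives $-\int\Delta d\cdot\Delta d\,dx=-\|\Delta d\|_2^2$, while the Ginzburg–Landau–type term $\int|\nabla d|^2 d\cdot\Delta d\,dx$ must be handled: here the constraint $|d|=1$ is crucial, for it forces $d\cdot\Delta d=-|\nabla d|^2$ pointwise (differentiate $|d|^2=1$ twice), so $\int|\nabla d|^2d\cdot\Delta d\,dx=-\|\nabla d\|_4^4$. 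Finally the transport term contributes $\int(u\cdot\nabla d)\cdot\Delta d\,dx$. Adding the two identities, the cross terms $-\int(\Delta d\cdot\nabla d)\cdot u\,dx$ and $\int(u\cdot\nabla)d\cdot\Delta d\,dx$ cancel exactly (they are $\pm\sum_{i,j}\int u_i\,\partial_i d_j\,\Delta d_j\,dx$), yielding
\[
\frac12\frac{d}{dt}\bigl(\|\sqrt\rho u\|_2^2+\|\nabla d\|_2^2\bigr)+\|\nabla u\|_2^2+\|\Delta d\|_2^2=\|\nabla d\|_4^4.
\]
Integrating in time from $0$ to $t$ and taking the supremum over $t\in[0,T]$ gives the first inequality of the lemma, recalling $C_0=\|\sqrt{\rho_0}u_0\|_2^2+\|\nabla d_0\|_2^2$.

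For the second, sharper inequality I would instead test \eqref{1.4} with $-(\Delta d+|\nabla d|^2d)=-\gamma^{-1}(d_t+(u\cdot\nabla)d)$; in fact the cleanest route is to observe that the natural energy for the harmonic-map heat flow with the constraint is exactly $\|\nabla d\|_2^2$ and that $\Delta d+|\nabla d|^2d$ is its $L^2$ negative gradient, so testing \eqref{1.4} against $-(\Delta d+|\nabla d|^2 d)$ gives $\frac12\frac{d}{dt}\|\nabla d\|_2^2+\|\Delta d+|\nabla d|^2d\|_2^2=-\int(u\cdot\nabla d)\cdot(\Delta d+|\nabla d|^2d)\,dx$, and the term $\int(u\cdot\nabla d)\cdot|\nabla d|^2d\,dx$ vanishes because $|\nabla d|^2 d\cdot\partial_i d=\frac12|\nabla d|^2\partial_i|d|^2=0$. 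Combining with the $u$-energy identity, the cross terms again cancel and the right-hand side is now nonpositive, so one gets $\frac12\frac{d}{dt}(\|\sqrt\rho u\|_2^2+\|\nabla d\|_2^2)+\|\nabla u\|_2^2+\|\Delta d+|\nabla d|^2d\|_2^2\le 0$; dropping the last nonnegative term and integrating yields the stated estimate. The only point requiring care—and the main (mild) obstacle—is justifying all integrations by parts and the use of the pointwise identities $d\cdot\Delta d=-|\nabla d|^2$ and $d\cdot\partial_i d=0$ at the regularity level of a strong solution; since Lemma \ref{local existence} places $d$ in $W^{4,2}_2(Q_T)\cap L^\infty(0,T;H^3)$ and $u$ in $L^\infty(0,T;H^2\cap H^1_0)$, all the products and boundary terms are legitimate, so the computation is rigorous as stated.
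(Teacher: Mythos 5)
Your proof is correct and follows essentially the same route as the paper: both derive the identity $\tfrac12\tfrac{d}{dt}\bigl(\|\sqrt\rho u\|_2^2+\|\nabla d\|_2^2\bigr)+\|\nabla u\|_2^2+\|\Delta d\|_2^2=\|\nabla d\|_4^4$ by testing the momentum equation with $u$, letting the cross term cancel against the transport term of the $d$-equation, and using $d\cdot\Delta d=-|\nabla d|^2$. Your second step---testing with the tension field $-(\Delta d+|\nabla d|^2d)$ and discarding $\|\Delta d+|\nabla d|^2d\|_2^2\geq 0$---is algebraically the same as the paper's observation that $\|\nabla d\|_4^4\leq\|\Delta d\|_2^2$, since $\|\Delta d+|\nabla d|^2d\|_2^2=\|\Delta d\|_2^2-\|\nabla d\|_4^4$ when $|d|=1$.
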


\begin{proof}
Multiplying (\ref{1.2}) by $u$, using (\ref{1.1}) and integration by
parts, we obtain
\begin{align*}
&\frac{d}{dt}\int_\Omega\frac{\rho|u|^2}{2}dx+\int_\Omega|\nabla
u|^2dx=-\int_\Omega(\Delta d\cdot\nabla d)\cdot
udx\\
=&-\int_\Omega(u\cdot\nabla ) d\cdot\Delta ddx =-\int_\Omega\Delta
d\cdot(\Delta d+|\nabla d|^2d-d_t)dx,
\end{align*}
from which we get
\begin{align*}
\frac{1}{2}\frac{d}{dt}\int_\Omega&\big(\rho|u|^2+|\nabla d|^2\big)dx+\int_\Omega\big(|\nabla u|^2+|\Delta d|^2\big)dx\\
=&-\int_\Omega|\nabla d|^2\Delta d\cdot ddx=\int_\Omega|\nabla d|^4dx,
\end{align*}
here we have used the fact that $\Delta d\cdot d=-|\nabla d|^2$, which is guaranteed by $0=\Delta|d|^2=2\Delta d\cdot d+2|\nabla d|^2.$
Hence
\begin{align*}
&\sup_{0\leq t\leq T}(\|\sqrt\rho u\|_2^2+\|\nabla d\|_2^2)+2\int_0^T(\|\nabla u\|_2^2+\|\Delta d\|_2^2)dt\nonumber\\
\leq& 2\int_0^T\|\nabla d\|_4^4dt+(\|\sqrt\rho_0u_0\|_2^2+\|\nabla
d_0\|_2^2)\leq C_0+2\int_0^T\|\nabla d\|_4^4dt.\label{1}
\end{align*}
Since $|\nabla d|^2=-\Delta d\cdot d$ and $|d|=1$, it follows
$\|\nabla d\|_4^4\leq\|\Delta d\|_2^2$, and thus we deduce from the
above inequality that
$$
\sup_{0\leq t\leq T}(\|\sqrt\rho u\|_2^2+\|\nabla
d\|_2^2)+2\int_0^T(\|\nabla u\|_2^2+\|\Delta d\|_2^2)dt\leq
C_0+2\int_0^T\|\Delta d\|_2^2dt,
$$
which implies
\begin{equation*}\label{2.2-1}
\sup_{0\leq t\leq T}(\|\sqrt\rho u\|_2^2+\|\nabla
d\|_2^2)+2\int_0^T\|\nabla u\|_2^2dt\leq C_0.
\end{equation*}
The proof is complete.
\end{proof}

\begin{lemma}\label{lemL.1}Let $(\rho,u,p,d)$ be a
strong solution to (\ref{1.1})--(\ref{1.6}) in $Q_T$, and set
$$
E_1(t)=\sup_{0\leq s\leq t}(\|\nabla u\|_2^2+\|\nabla^2d\|_2^2)+\int_0^t(\|\sqrt\rho u_t\|_2^2+\|\nabla^2u\|_2^2+\|\nabla p\|_2^2+\|\nabla^3d\|_2^2)ds.
$$
Then we have the following

(i) If $N=2$, there holds
\begin{equation*}
E_1(t)\leq C(\|\nabla u_0\|_2^2+\|\nabla^2d_0\|_2^2)+CC_0E_1(t)+CC_0^{1/2}\int_0^t\|u\|_\infty^2\|\nabla u\|_2^2dt+C\int_0^t\|\nabla^2d\|_2^2ds.
\end{equation*}

(ii) If $N=3$, there holds
\begin{equation*}
E_1(t) \leq C(\|\nabla u_0\|_2^2+\|\nabla^2d_0\|_2^2)+CC_0E_1(t)^2+C\int_0^t\|\nabla^2d\|_2^2ds.
\end{equation*}
\end{lemma}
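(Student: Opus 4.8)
The plan is to derive the inequality for $E_1(t)$ by testing the equations with their natural higher-order multipliers. First I would multiply the momentum equation \eqref{1.2-1} by $u_t$ and integrate over $\Omega$; this produces $\int_\Omega\rho|u_t|^2\,dx+\frac12\frac{d}{dt}\|\nabla u\|_2^2$ on the left, balanced on the right by the convection term $-\int_\Omega\rho(u\cdot\nabla)u\cdot u_t\,dx$ and the coupling term $-\int_\Omega(\Delta d\cdot\nabla d)\cdot u_t\,dx$. The convection term is handled by H\"older and the Gagliardo--Nirenberg inequality, absorbing $\|\sqrt\rho u_t\|_2^2$ into the left with a small constant and leaving a term like $\|u\|_\infty^2\|\nabla u\|_2^2$ (the origin of the extra term in case $N=2$) or, for $N=3$, a term estimated by $\bar\rho\|u\|_6^2\|\nabla u\|_3^2$ which interpolates into $E_1(t)^2$. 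The coupling term should be integrated by parts in $t$ after rewriting $\Delta d\cdot\nabla d$, or directly estimated using the $d$-equation to trade $\Delta d$ for $d_t$; in any case it contributes quantities controlled by $\|\nabla d\|_\infty$, $\|\nabla^2 d\|_2$ and $\|\nabla u\|_2$, leading to $C_0E_1(t)$ (resp.\ $C_0E_1(t)^2$) after using Lemma \ref{lemL.0}.

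Next I would differentiate the $d$-equation \eqref{1.4}, or rather test it against $-\Delta^2 d$ (equivalently, multiply $\partial_t d+(u\cdot\nabla)d=\Delta d+|\nabla d|^2d$ by $\Delta^2 d$ and integrate), to get $\frac12\frac{d}{dt}\|\nabla^2 d\|_2^2+\|\nabla^3 d\|_2^2$ on the left. The right-hand side has the transport term $\int_\Omega(u\cdot\nabla)d\cdot\Delta^2 d\,dx$ and the supercritical term $\int_\Omega|\nabla d|^2 d\cdot\Delta^2 d\,dx$; after integrating by parts once to move one derivative onto $\nabla^3 d$, these are estimated by products of $\|u\|_\infty$ or $\|\nabla d\|_\infty$ (or $\|\nabla d\|_6$, $\|\nabla^2 d\|_3$ in 3D) with $\|\nabla^3 d\|_2$, again absorbing a small multiple of $\|\nabla^3 d\|_2^2$ and leaving terms of the form appearing on the right-hand side of (i)/(ii), using Lemma \ref{lemL.0} to make the coefficients proportional to $C_0$. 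To close the estimate one still needs $\|\nabla^2 u\|_2$ and $\|\nabla p\|_2$; these come from viewing \eqref{1.2-1} at fixed time as a stationary Stokes system with right-hand side $-\rho u_t-\rho(u\cdot\nabla)u-\Delta d\cdot\nabla d$ and applying Lemma \ref{est on stok}, so that $\|\nabla^2 u\|_2^2+\|\nabla p\|_2^2\lesssim\|\sqrt\rho u_t\|_2^2+(\text{lower order in }E_1)$. Similarly $\|\nabla^3 d\|_2$ is recovered from the elliptic estimate Lemma \ref{est on ellip} applied to $\Delta d=d_t+(u\cdot\nabla)d-|\nabla d|^2 d$.

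Finally I would integrate the two differential inequalities in time from $0$ to $t$, add them, feed in the Stokes and elliptic bounds, and collect terms: the initial contributions give $C(\|\nabla u_0\|_2^2+\|\nabla^2 d_0\|_2^2)$, all genuinely quadratic-in-$E_1$ contributions that carry a factor of the basic energy collapse into $CC_0E_1(t)$ for $N=2$ or $CC_0E_1(t)^2$ for $N=3$ (the different power reflecting that in 3D the relevant Gagliardo--Nirenberg interpolations cost an extra half derivative, forcing two factors of $\sup(\|\nabla u\|_2^2+\|\nabla^2 d\|_2^2)$), the remaining time integral of lower-order curvature of $d$ is left as $C\int_0^t\|\nabla^2 d\|_2^2\,ds$ to be absorbed later via a Gronwall-type argument, and the stray term $CC_0^{1/2}\int_0^t\|u\|_\infty^2\|\nabla u\|_2^2\,ds$ survives only in 2D. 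The main obstacle I anticipate is the careful bookkeeping of the coupling and supercritical $|\nabla d|^2 d$ terms: one must arrange every estimate so that the ``dangerous'' factor is either a small power of $C_0$ (via Lemma \ref{lemL.0}) or an $E_1$-controlled quantity, and in particular one must choose the interpolations in 3D so that no term exceeds quadratic order in $E_1$ — a single misplaced $\|\nabla^2 d\|_3$ or $\|u\|_\infty$ that cannot be paired with a smallness factor would break the continuity argument. Getting the 2D estimate to retain exactly the $\|u\|_\infty^2\|\nabla u\|_2^2$ form (rather than something worse) is what makes the subsequent logarithmic Sobolev argument possible, so that term must be tracked deliberately rather than crudely bounded.
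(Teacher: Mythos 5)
Your plan follows the paper's proof in all essentials: multiplying (\ref{1.2}) by $u_t$ and pulling the time derivative off the elastic stress term $\int_\Omega\nabla d\odot\nabla d\,\nabla u_t\,dx$, recovering $\|\nabla^2u\|_2+\|\nabla p\|_2$ from the stationary Stokes estimate of Lemma \ref{est on stok}, performing the $H^2$-level energy estimate on $d$ by applying $\Delta$ to (\ref{1.4}) and testing with $\Delta d$ (your $\Delta^2d$ multiplier is the same computation), and interpolating every nonlinear term so that either a factor of $C_0$ from Lemma \ref{lemL.0} or a power of $E_1$ comes out. Your identification of the convection term $\rho|u|^2|\nabla u|^2$ as the source of the $\|u\|_\infty^2\|\nabla u\|_2^2$ term in two dimensions, and of the extra half-derivative in the three-dimensional interpolations as the source of the quadratic power $E_1(t)^2$, is also exactly what happens in the paper.

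The one genuine gap is the boundary contribution in the $d$-estimate. When you ``integrate by parts once to move one derivative onto $\nabla^3d$'' in $-\int_\Omega\Delta^2d\,\Delta d\,dx$, the boundary term $-\int_{\partial\Omega}\Delta d\,\frac{\partial}{\partial n}\Delta d\,dS$ does not vanish: the boundary condition (\ref{1.6}) together with equation (\ref{1.4}) forces $\Delta d|_{\partial\Omega}=|\nabla d|^2d|_{\partial\Omega}$ rather than $0$. The paper must substitute this identity, use $d\cdot\Delta d=-|\nabla d|^2$ to rewrite $\frac{\partial}{\partial n}(d\cdot\Delta d)$, and then control the resulting surface integral $\int_{\partial\Omega}|\nabla d|^3|\nabla^2d|\,dS$ by the trace inequality $\|f\|_{L^1(\partial\Omega)}\leq C\|f\|_{W^{1,1}(\Omega)}$, which generates additional interior terms of the form $|\nabla d|^3|\nabla^3d|$ and $|\nabla d|^3|\nabla^2d|$ that must themselves be interpolated like the others. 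Silently discarding this boundary term leaves the argument incomplete. A second, smaller point: your fallback of recovering $\|\nabla^3d\|_2$ from the elliptic estimate applied to $\Delta d=d_t+(u\cdot\nabla)d-|\nabla d|^2d$ would require an a priori bound on $\int_0^t\|\nabla d_t\|_2^2\,ds$, which is not part of $E_1$ and is available at this stage only with a small $\varepsilon$-coefficient to be absorbed; the dissipation $\int_0^t\|\nabla\Delta d\|_2^2\,ds$ produced directly by the energy identity is what should be used, as in your primary route.
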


\begin{proof}
Multiplying (\ref{1.2}) by $u_t$, integration by parts and using
Young inequality, we get
\begin{align*}
&\frac{d}{dt}\int_\Omega\frac{|\nabla u|^2}{2}dx+\int_\Omega\rho|u_t|^2dx=\int_\Omega\big(\nabla d\odot\nabla d\nabla u_t-\rho(u\cdot\nabla )u\cdot u_t)dx\nonumber\\
=&\frac{d}{dt}\int_\Omega\nabla d\odot \nabla d\nabla udx-\int_\Omega\left((\nabla d_t\odot\nabla d+\nabla d\odot\nabla d_t)\nabla u+\rho(u\cdot\nabla)u\cdot u_t\right)dx,
\end{align*}
and thus
\begin{align*}
&\frac{d}{dt}\int_\Omega\left(\frac{|\nabla u|^2}{2}-\nabla d\odot\nabla d\nabla u\right)dx+\int_\Omega\rho|u_t|^2dx\\
\leq&2\int_\Omega(|\nabla d||\nabla d_t||\nabla u|+\rho|u\cdot\nabla u||u_t|)dx\nonumber\\
\leq&\frac{1}{2}\int_\Omega\rho|u_t|^2dx+C\int_\Omega(|\nabla d||\nabla d_t||\nabla u|+\rho|u\cdot\nabla u|^2)dx,
\end{align*}
which gives
\begin{equation}\label{n3.1}
\frac{d}{dt}\int_\Omega\left(|\nabla u|^2-2\nabla d\odot\nabla d\nabla u\right)dx+\int_\Omega\rho|u_t|^2dx\leq C\int_\Omega(|\nabla d||\nabla d_t||\nabla u|+\rho|u\cdot\nabla u|^2)dx.
\end{equation}
Applying elliptic estimates of Stokes equations to (\ref{1.2}) yields
\begin{equation}
\|\nabla^2u\|_2^2+\|\nabla p\|_2^2\leq C(\|\rho(u_t+u\cdot\nabla u)\|_2^2+\|\nabla d|\nabla^2d|\|_2^2).\label{L.2}
\end{equation}
Combining (\ref{n3.1}) with (\ref{L.2}), there holds
\begin{align}
&\sup_{0\leq s\leq t}\|\nabla u\|_2^2+\int_0^t(\|\sqrt\rho u_t\|_2^2+\|\nabla^2u\|_2^2+\|\nabla p\|_2^2)ds\nonumber\\
\leq&C\|\nabla u_0\|_2^2+C\sup_{0\leq s\leq t}\||\nabla d|^2\nabla u\|_1+\varepsilon\int_0^T\|\nabla d_t\|_2^2ds\nonumber\\
&+\int_0^t\int_\Omega(|\nabla d|^2|\nabla u|^2+\rho|u|^2|\nabla u|^2+|\nabla d|^2|\nabla^2d|^2)dxds.\label{L.3}
\end{align}
Taking the operator $\Delta$ to both sides of equation (\ref{1.4}) and then multiplying the resulting equation by $\Delta d$, we deduce
\begin{align}
&\frac{d}{dt}\int_\Omega\frac{|\Delta d|^2}{2}dx-\int_\Omega\Delta^2d\Delta ddx\nonumber\\
=&\int_\Omega(|\nabla d|^2|\Delta d|^2+2\nabla d:\nabla \Delta dd\Delta d+2|\nabla^2d|^2d\Delta d\nonumber\\
&-(\Delta u\cdot\nabla)d\Delta d-2(\nabla u_i\partial_i\nabla)d\Delta d)dx\nonumber\\
\leq&\varepsilon\int_\Omega(|\nabla\Delta d|^2+|\Delta u|^2)dx+C\int_\Omega(|\nabla d|^2|\nabla^2d|^2+|\nabla u||\nabla^2d|^2)dx.\label{L.4}
\end{align}
In the above we have used the fact that $d\cdot\Delta d=-|\nabla d|^2$ guaranteed by $|d|=1$. Note that $\Delta d|_{\partial\Omega}=|\nabla d|^2d|_{\partial\Omega}$ guaranteed by equation (\ref{1.4}) and the boundary condition (\ref{1.6}). Integration by parts gives
\allowdisplaybreaks\begin{align*}
-\int_\Omega\Delta^2d\Delta ddx=&-\int_{\partial\Omega}\Delta d\frac{\partial}{\partial n}\Delta ddS+\int_\Omega|\nabla\Delta d|^2dx\\
=&\int_{\partial\Omega}|\nabla d|^2d\frac{\partial}{\partial n}\Delta ddS+\int_\Omega|\nabla \Delta d|^2dx\\
=&\int_{\partial\Omega}\left(|\nabla d|^2\frac{\partial}{\partial n}(d\Delta d)-|\nabla d|^2\Delta d\frac{\partial}{\partial n}d\right)dS+\int_\Omega|\nabla\Delta d|^2dx\\
=&-\int_{\partial\Omega}\left(|\nabla d|^2\frac{\partial}{\partial n}|\nabla d|^2+|\nabla d|^2\Delta d\frac{\partial}{\partial n}d\right)dS+\int_\Omega|\nabla\Delta d|^2dx,
\end{align*}
which, combined with (\ref{L.4}), it follows from the trace inequality that
\begin{align*}
&\frac{d}{dt}\int_\Omega\frac{|\Delta d|^2}{2}dx+\int_\Omega|\nabla\Delta d|^2dx\\
\leq&\varepsilon\int_\Omega(|\nabla\Delta d|^2+|\Delta u|^2)dx+C\int_{\partial\Omega}|\nabla d|^3|\nabla^2d|dS\\
&+C\int_\Omega(|\nabla d|^2|\nabla^2d|^2+|\nabla u||\nabla^2d|^2)dx\\
\leq&\varepsilon\int_\Omega(|\nabla\Delta d|^2+|\Delta u|^2)dx+C\||\nabla d|^3|\nabla^2d|\|_{W^{1,1}(\Omega)}\\
&+C\int_\Omega(|\nabla d|^2|\nabla^2d|^2+|\nabla u||\nabla^2d|^2)dx\\
\leq&\varepsilon\int_\Omega(|\nabla\Delta d|^2+|\Delta u|^2)dx+C\int_{\Omega}(|\nabla d|^3|\nabla^2d|+|\nabla d|^2|\nabla^2d|^2+|\nabla d|^3|\nabla^3d|)dx\\
&+C\int_\Omega(|\nabla d|^2|\nabla^2d|^2+|\nabla u||\nabla^2d|^2)dx\\
\leq&\varepsilon\int_\Omega(|\nabla\Delta d|^2+|\Delta u|^2)dx+C\int_\Omega(|\nabla d|^2|\nabla^2d|^2+|\nabla d|^3|\nabla^3d|\\
&+|\nabla u||\nabla^2d|^2+|\nabla d|^3|\nabla^2d|)dx\\
\leq&\varepsilon\int_\Omega(|\nabla^3 d|^2+|\Delta u|^2)dx+C\int_\Omega(|\nabla d|^2|\nabla^2d|^2+|\nabla d|^6\\
&+|\nabla u||\nabla^2d|^2+|\nabla^2d|^2)dx,
\end{align*}
and thus
\begin{align*}
&\sup_{0\leq t\leq T}\|\nabla^2 d\|_2^2+\int_0^T\|\nabla^3d\|_2^2dt\nonumber\\
\leq& C\|\nabla^2d_0\|_2^2+\varepsilon\int_0^T\|\nabla^2u\|_2^2dt+C\int_0^T(|\nabla d|^2|\nabla^2d|^2\\
&+|\nabla u||\nabla^2d|^2+|\nabla d|^6+|\nabla^2d|^2)dxdt.
\end{align*}
Combining (\ref{L.3}) with the above inequality, and using Lemma \ref{lemL.0}, we obtain
\begin{align}
&\sup_{0\leq s\leq t}(\|\nabla u\|_2^2+\|\nabla^2d\|_2^2)+\int_0^t(\|\sqrt\rho u_t\|_2^2+\|\nabla^2u\|_2^2+\|\nabla p\|_2^2+\|\nabla^3d\|_2^2)ds\nonumber\\
\leq&C(\|\nabla u_0\|_2^2+\|\nabla^2d_0\|_2^2)+C\sup_{0\leq s\leq t}\||\nabla d|^2\nabla u\|_1+C\int_0^t\int_\Omega(|\nabla d|^2|\nabla^2d|^2+|\nabla d|^2|\nabla u|^2\nonumber\\
&+\rho|u|^2|\nabla u|^2+|\nabla u||\nabla^2d|^2+|\nabla d|^6+|\nabla^2d|^2)dxds\nonumber\\
=&C(\|\nabla u_0\|_2^2+\|\nabla^2d_0\|_2^2)+\sum_{i=1}^6I_i+C\int_0^t\|\nabla^2d\|_2^2ds.\label{L.6}
\end{align}
We estimates the terms on the right hand side of the above inequality as follows: If $N=3$, then it follows from Soblev embedding inequality and Lemma \ref{lemL.0} that
\allowdisplaybreaks\begin{align*}
I_1\leq&C\sup_{0\leq s\leq t}\|\nabla u\|_2\|\nabla d\|_2^{1/2}\|\nabla d\|_6^{3/2}\leq CC_0^{1/4}\sup_{0\leq s\leq t}\|\nabla u\|_2\|\nabla^2d\|_2^{3/2}\\
\leq&\varepsilon \sup_{0\leq s\leq t}\|\nabla^2d\|_2^2+CC_0\sup_{0\leq s\leq t}\|\nabla u\|_2^4,\\
I_2\leq&C\int_0^t\|\nabla d\|_2\|\nabla d\|_6\|\nabla^2d\|_6^2ds\leq CC_0^{1/2}\sup_{0\leq s\leq t}\|\nabla^2d\|_2\int_0^t\|\nabla^3d\|_2^2ds,\\
I_3\leq&C\int_0^t\|\nabla d\|_2\|\nabla d\|_6\|\nabla u\|_6^2ds\leq CC_0^{1/2}\sup_{0\leq s\leq  t}\|\nabla^2d\|_2\int_0^t\|\nabla^2u\|_2^2ds,\\
I_4\leq&C\int_0^t\|\sqrt\rho u\|_2\|u\|_6\|\nabla u\|_6^2ds\leq CC_0^{1/2}\sup_{0\leq s\leq t}\|\nabla u\|_2\int_0^t\|\nabla^2u\|_2^2ds,\\
I_5\leq&C\int_0^t\|\nabla u\|_2\|\nabla^2d\|_4^2ds\leq C\int_0^t\|\nabla u\|_2\|\nabla^2d\|_2^{1/2}\|\nabla^3d\|_2^{3/2}ds\\
\leq&\varepsilon\int_0^t\|\nabla^3d\|_2^2ds+C\int_0^t\|\nabla u\|_2^4\|\nabla^2d\|_2^2ds\\
\leq&\varepsilon\int_0^t\|\nabla^3d\|_2^2ds+CC_0\sup_{0\leq s\leq t}\|\nabla u\|_2^2\|\nabla^2d\|_2^2,\\
I_6\leq&C\int_0^t\|\nabla^2d\|_2^6ds\leq C\int_0^T\|\nabla^2d\|_2^4\|\nabla d\|_2^2\|\nabla^3d\|_2^2ds\\
\leq& CC_0\sup_{0\leq s\leq t}\|\nabla^2d\|_2^2\int_0^t\|\nabla^3d\|_2^2ds,
\end{align*}
and if $N=2$, then
\begin{align*}
I_1\leq&C\sup_{0\leq s\leq t}\|\nabla u\|_2\|\nabla d\|_4^2\leq C\sup_{0\leq s\leq t}\|\nabla u\|_2\|\nabla d\|_2\|\nabla^2d\|_2\\
\leq& CC_0^{1/2}\sup_{0\leq s\leq t}\|\nabla u\|_2\|\nabla^2d\|_2,\\
I_2\leq&C\int_0^t\|\nabla d\|_4^2\|\nabla^2d\|_4^2ds\leq C\int_0^t\|\nabla d\|_2\|\nabla^2d\|_2^2\|\nabla^3d\|_2ds\\
\leq& C\int_0^t\|\nabla d\|_2^2\|\nabla^3d\|_2^2ds\leq CC_0\int_0^T\|\nabla^3d\|_2^2ds,\\
I_3\leq&C\int_0^t\|\nabla d\|_4^2\|\nabla u\|_4^2ds\leq C\int_0^t\|\nabla d\|_2\|\nabla^2d\|_2\|\nabla u\|_2\|\nabla^2u\|_2ds\\
\leq&\varepsilon\int_0^t\|\nabla^2u\|_2^2ds+C\int_0^t\|\nabla d\|_2^2\|\nabla^2d\|_2^2\|\nabla u\|_2^2ds\\
\leq&\varepsilon\int_0^t\|\nabla^2u\|_2^2ds+CC_0^2\sup_{0\leq s\leq t}\|\nabla^2d\|_2^2,\\
I_4\leq&C\int_0^t\|\sqrt\rho u\|_2\|u\|_\infty\|\nabla u\|_4^2ds\leq CC_0^{1/2}\int_0^t\|u\|_\infty\|\nabla u\|_2\||\nabla^2u\|_2ds\\
\leq&\varepsilon\int_0^t\|\nabla^2u\|_2^2ds+CC_0\int_0^t\|u\|_\infty^2\|\nabla^2u\|_2^2ds,\\
I_5\leq&C\int_0^t\|\nabla u\|_2\|\nabla^2d\|_4^2ds\leq C\int_0^t\|\nabla u\|_2\|\nabla^2d\|_2\|\nabla^3d\|_2ds\\
\leq&\varepsilon\int_0^t\|\nabla^3d\|_2^2ds+C\int_0^t\|\nabla u\|_2^2\|\nabla^2d\|_2^2ds\\
\leq&\varepsilon\int_0^t\|\nabla^3d\|_2^2ds+CC_0\sup_{0\leq s\leq t}\|\nabla^2d\|_2^2,\\
I_6\leq&C\int_0^t\|\nabla d\|_2^2\|\nabla^2d\|_2^4ds\leq C\int_0^t\|\nabla d\|_2^4\|\nabla^3d\|_2^2ds\leq CC_0^2\int_0^t\|\nabla^3d\|_2^2ds.
\end{align*}
Substituting the above inequalities into (\ref{L.6}) and setting
\begin{align*}
E_1(t)=\sup_{0\leq s\leq t}(\|\nabla u\|_2^2+\|\nabla^2d\|_2^2)+\int_0^t(\|\sqrt\rho u_t\|_2^2+\|\nabla^2u\|_2^2+\|\nabla p\|_2^2+\|\nabla^3d\|_2^2)ds,
\end{align*}
we obtain that, if $N=3$ then
\begin{align*}
E_1(t)\leq& C(\|\nabla u_0\|_2^2+\|\nabla^2d_0\|_2^2)+CC_0E_1(t)^2+CC_0^{1/2}E_1(t)^{3/2}+C\int_0^t\|\nabla^2d\|_2^2ds\\
\leq&C(\|\nabla u_0\|_2^2+\|\nabla^2d_0\|_2^2)+CC_0E_1(t)^2+\frac{1}{2}E_1(t)+C\int_0^t\|\nabla^2d\|_2^2ds,
\end{align*}
and thus
\begin{equation*}
E_1(t) \leq C(\|\nabla u_0\|_2^2+\|\nabla^2d_0\|_2^2)+CC_0E_1(t)^2+C\int_0^t\|\nabla^2d\|_2^2ds,
\end{equation*}
and if $N=2$, then
\begin{equation*}
E_1(t)\leq C(\|\nabla u_0\|_2^2+\|\nabla^2d_0\|_2^2)+CC_0E_1(t)+CC_0^{1/2}\int_0^t\|u\|_\infty^2\|\nabla u\|_2^2dt+C\int_0^t\|\nabla^2d\|_2^2ds.
\end{equation*}
The proof is complete.
\end{proof}

Before continuing the energy estimates, we cite the following Sobolev inequality of logarithmic type, which will be used in Lemma \ref{lemE1}.

\begin{lemma}\label{lemL.2}(See \cite{HUANGWANG}) Assume $\Omega$ is a bounded smooth domain in $\mathbb R^2$ and $f\in L^2(s,t; H^1(\Omega))\cap L^2(0, T; W^{1,q}(\Omega))$, with some $q>2$ and $0\leq s<t\leq\infty$. Then it holds that
$$
\|f\|_{L^2(s, t; L^\infty(\Omega))}\leq C(1+\|f\|_{L^2(s,t; H^1(\Omega))}(\ln^+\|f\|_{L^2(s, t; W^{1,q}(\Omega))})^{1/2}),
$$
with some constant $C$ depending only on $q$ and $\Omega$, and independent of $s, t$.
\end{lemma}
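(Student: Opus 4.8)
\noindent\emph{Proof proposal.} The plan is to deduce this space-time estimate from a pointwise-in-time Brezis--Gallouet--Wainger inequality in two space dimensions, by squaring, integrating over $(s,t)$, and using the concavity of $x\mapsto\ln(e+x)$ to bring the time integral inside the logarithm.

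First I would prove that there is $C=C(q,\Omega)$ such that for every $g\in H^1(\Omega)\cap W^{1,q}(\Omega)$ with $q>2$,
$$
\|g\|_{L^\infty(\Omega)}\le C\Big(1+\|g\|_{H^1(\Omega)}\big(\ln^+\|g\|_{W^{1,q}(\Omega)}\big)^{1/2}\Big).
$$
Since $\Omega$ is bounded and smooth, extend $g$ to $\tilde g\in H^1(\mathbb R^2)\cap W^{1,q}(\mathbb R^2)$ with comparable norms and decompose $\tilde g=\sum_{j\ge-1}\Delta_j\tilde g$ dyadically. For a cutoff $R=2^J\ge2$, estimate the low- and intermediate-frequency part $\sum_{2^j\le R}\|\Delta_j\tilde g\|_\infty$ by Bernstein's inequality together with Cauchy--Schwarz over the $O(\log R)$ frequency blocks, obtaining a bound $\le C(1+(\log R)^{1/2})\|g\|_{H^1}$, and the high-frequency part $\sum_{2^j>R}\|\Delta_j\tilde g\|_\infty$ by Bernstein's inequality in $L^q$ and summation of a geometric series (this is where $q>2$ enters), obtaining $\le CR^{-(1-2/q)}\|g\|_{W^{1,q}}$. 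Choosing $R$ so that the high-frequency bound does not exceed $\|g\|_{H^1}$ gives $\log R\le C(1+\ln^+\|g\|_{W^{1,q}})$ once one notes that $\|g\|_{H^1}\le C\|g\|_{W^{1,q}}$ on the bounded domain (so the ratio $\|g\|_{W^{1,q}}/\|g\|_{H^1}$ stays bounded below), and the pointwise inequality follows.

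Applying this pointwise inequality with $g=f(\cdot,\tau)$ and squaring gives, for a.e.\ $\tau\in(s,t)$,
$$
\|f(\tau)\|_{L^\infty}^2\le C\Big(1+\|f(\tau)\|_{H^1}^2\ln^+\|f(\tau)\|_{W^{1,q}}\Big).
$$
I would then integrate in $\tau$ over $(s,t)$. To keep the resulting constant independent of $s$ and $t$ (including $t=\infty$), split the interval into the set where $\|f(\tau)\|_{W^{1,q}}\le e$ --- on which the two-dimensional embedding $W^{1,q}(\Omega)\hookrightarrow L^\infty(\Omega)$ gives at once $\int\|f(\tau)\|_{L^\infty}^2\,d\tau\le C\|f\|_{L^2(s,t;W^{1,q})}^2$ --- and its complement $B$, which necessarily has $|B|\le e^{-2}\|f\|_{L^2(s,t;W^{1,q})}^2<\infty$. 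On $B$ the term coming from the additive constant is handled by $\int_B\ln^+\|f(\tau)\|_{W^{1,q}}\,d\tau\le|B|^{1/2}\|f\|_{L^2(s,t;W^{1,q})}\le C\|f\|_{L^2(s,t;W^{1,q})}^2$, while for $\int_B\|f(\tau)\|_{H^1}^2\ln^+\|f(\tau)\|_{W^{1,q}}\,d\tau$ one applies Jensen's inequality with the probability measure $d\mu=\|f(\tau)\|_{H^1}^2\,d\tau/\|f\|_{L^2(s,t;H^1)}^2$ and the concavity of $\ln(e+\cdot)$ to bound it by $\|f\|_{L^2(s,t;H^1)}^2\,\ln\!\big(e+\int\|f(\tau)\|_{W^{1,q}}\,d\mu\big)$; a Cauchy--Schwarz estimate of the space-time average inside the logarithm, together with the embedding $W^{1,q}(\Omega)\hookrightarrow H^1(\Omega)$, then puts it in terms of $\|f\|_{L^2(s,t;W^{1,q})}$, completing the proof.

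I expect the delicate point to be precisely this last step: the pointwise Brezis--Gallouet--Wainger inequality is classical, but assembling it into a space-time inequality whose constant does not blow up as $t-s\to\infty$ forces the splitting above, and arranging the bookkeeping in the term $\int_B\|f\|_{H^1}^2\ln^+\|f\|_{W^{1,q}}\,d\tau$ so that the argument of the final logarithm is exactly the $L^2(s,t;W^{1,q})$ norm --- rather than an $L^\infty$- or $L^4$-in-time quantity that a crude estimate produces --- is the real content of the lemma.
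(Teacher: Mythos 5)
The paper does not prove this lemma; it is quoted verbatim from \cite{HUANGWANG}, so there is no in-paper argument to compare against, and your proposal has to stand on its own. It does not: the strategy of first establishing the pointwise Brezis--Gallouet--Wainger inequality, then squaring and integrating in time, breaks down exactly where you try to repair the time-independence of the constant. After squaring, the target inequality reads $\|f\|_{L^2(s,t;L^\infty)}^2\leq C\big(1+\|f\|_{L^2(s,t;H^1)}^2\ln^+\|f\|_{L^2(s,t;W^{1,q})}\big)$, in which the $W^{1,q}$ norm appears \emph{only inside a logarithm}. But two of your error terms are bounded by $C\|f\|_{L^2(s,t;W^{1,q})}^2$: the contribution of the set where $\|f(\tau)\|_{W^{1,q}}\leq e$ (via the embedding $W^{1,q}\hookrightarrow L^\infty$), and the contribution of the additive constant on the complement $B$ (via $|B|\leq e^{-2}\|f\|_{L^2(s,t;W^{1,q})}^2$). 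A quantity quadratic in $\|f\|_{L^2(s,t;W^{1,q})}$ is not dominated by one that is logarithmic in it (take $\|f\|_{L^2H^1}$ of order one and $\|f\|_{L^2W^{1,q}}$ huge), so the conclusion does not follow. The Jensen step has the same disease in a subtler form: the average $\int\|f\|_{W^{1,q}}\,d\mu$ with $d\mu\propto\|f(\tau)\|_{H^1}^2\,d\tau$ requires you to estimate $\int_s^t\|f\|_{W^{1,q}}\|f\|_{H^1}^2\,d\tau$, and any H\"older splitting of this triple product into the two available $L^2$-in-time norms leaves over a factor $\sup_\tau\|f(\tau)\|_{H^1}$ or $\|f\|_{L^4(s,t;H^1)}$, neither of which is among the hypotheses. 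You flag this as ``the real content of the lemma,'' which is honest, but it is precisely the step that is missing.

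The cure is to abandon the pointwise optimization of the frequency cutoff altogether. Your own dyadic decomposition already gives, for every $R\geq 2$ and a.e.\ $\tau$, the two-parameter bound $\|f(\tau)\|_{L^\infty}\leq C(1+\log R)^{1/2}\|f(\tau)\|_{H^1}+CR^{-(1-2/q)}\|f(\tau)\|_{W^{1,q}}$. Square this, integrate over $(s,t)$ \emph{with $R$ held fixed}, obtaining $\|f\|_{L^2(s,t;L^\infty)}^2\leq C(1+\log R)\|f\|_{L^2(s,t;H^1)}^2+CR^{-2(1-2/q)}\|f\|_{L^2(s,t;W^{1,q})}^2$, and only then choose $R=(e+\|f\|_{L^2(s,t;W^{1,q})})^{1/(1-2/q)}$. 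The second term becomes $\leq C$, the first becomes $C\big(1+\ln(e+\|f\|_{L^2(s,t;W^{1,q})})\big)\|f\|_{L^2(s,t;H^1)}^2$, and the residual term $C\|f\|_{L^2(s,t;H^1)}^2$ is absorbed into the right-hand side using $\|f\|_{L^2H^1}\leq C\|f\|_{L^2W^{1,q}}$ (if $\|f\|_{L^2W^{1,q}}\leq e$ it is bounded by a constant; otherwise $\ln^+\|f\|_{L^2W^{1,q}}\geq 1$). This yields the stated inequality with a constant depending only on $q$ and $\Omega$, with no splitting of the time interval, no Jensen, and no spurious $L^\infty$- or $L^4$-in-time quantities; it is essentially the argument of \cite{HUANGWANG}. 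A minor additional remark: in your derivation of the pointwise inequality, choosing $R$ so that the high-frequency part does not exceed $\|g\|_{H^1}$ gives $\log R\sim\log(\|g\|_{W^{1,q}}/\|g\|_{H^1})$, which is \emph{not} controlled by $1+\ln^+\|g\|_{W^{1,q}}$ when $\|g\|_{H^1}$ is small; the standard fix is to make the high-frequency part $\leq 1$ rather than $\leq\|g\|_{H^1}$, which is also what the uniform-in-time choice of $R$ above does.
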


Now, we state and prove the following lemma.

\begin{lemma}\label{lemE1}
Let $(\rho,u,p,d)$ be a
strong solution to (\ref{1.1})--(\ref{1.6}) in $Q_T$. Let $E_1(t)$ be the function defined in Lemma \ref{lemL.1}. Then there is a positive constant $\varepsilon_0$ depending only on $\overline\rho$ and $\Omega$, such that

(i) If $N=2$, then
$$
E_1(t)\leq C[1+C_0(\|\nabla u_0\|_2^2+\|\nabla^2d_0\|_2^2)^2],
$$
provided $\|\sqrt\rho_0u_0\|_2^2+\|\nabla d_0\|_2^2\leq\varepsilon_0$.

(ii) If $N=3$, then
$$
E_1(t)\leq 2C(\|\nabla u_0\|_2^2+\|\nabla^2d_0\|_2^2),
$$
provided $(\|\sqrt\rho_0u_0\|_2^2+\|\nabla d_0\|_2^2)(\|\nabla u_0\|_2^2+\|\nabla^2d_0\|_2^2)\leq\varepsilon_0.$
\end{lemma}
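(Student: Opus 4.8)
The plan is to feed the polynomial inequalities for $E_1(t)$ from Lemma~\ref{lemL.1} into a continuity argument, treating the two space dimensions separately because the inequalities have different shapes. In both cases the starting point is that $E_1$ is a continuous nondecreasing function of $t$ with $E_1(0)=\|\nabla u_0\|_2^2+\|\nabla^2 d_0\|_2^2=:\mathcal D_0$, and that $C_0$ is an absolute (small) parameter. The only genuinely new ingredient beyond Lemma~\ref{lemL.1} is a bound on the residual term $\int_0^t\|\nabla^2 d\|_2^2\,ds$, which I would extract from Lemma~\ref{lemL.0}: by Gagliardo--Nirenberg, $\|\nabla^2 d\|_2^2\le C\|\nabla d\|_2\|\nabla^3 d\|_2\le \varepsilon\|\nabla^3 d\|_2^2+CC_0\|\nabla^2d\|_2^2$ (and in 2D similarly), so $\int_0^t\|\nabla^2 d\|_2^2\,ds$ is absorbed into $\varepsilon E_1(t)+CC_0\int_0^t\|\nabla^2 d\|_2^2\,ds$; since actually $\int_0^t\|\nabla^2 d\|_2^2\,ds\le C\int_0^t(\|\nabla d\|_2^2+\|\Delta d\|_2^2)\,ds$ is controlled by the basic energy estimate up to the $\|\nabla d\|_4^4$ term, one gets $\int_0^t\|\nabla^2 d\|_2^2\,ds\le C(T)C_0$ a priori, but the whole point is to make the bound $T$-independent — so I would instead keep $C\int_0^t\|\nabla^2 d\|_2^2\,ds$ and close it together with the $E_1$ terms, noting $\|\nabla^2 d\|_2^2\le C\|\nabla d\|_2^{2/N}\|\nabla^3 d\|_2^{2-2/N}$ lets it be written as $\varepsilon E_1(t)+C C_0^{\alpha} E_1(t)$ for a suitable power, hence absorbed when $C_0$ is small.

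For $N=3$: from Lemma~\ref{lemL.1}(ii) and the above absorption, $E_1(t)\le C\mathcal D_0+CC_0 E_1(t)^2+\tfrac12 E_1(t)$, i.e. $E_1(t)\le 2C\mathcal D_0+2CC_0 E_1(t)^2$. Set $M:=4C\mathcal D_0$ and suppose, for contradiction, that $E_1(t_0)=M$ at the first such time $t_0$ (possible since $E_1(0)=\mathcal D_0<M$ and $E_1$ is continuous). Then $M\le 2C\mathcal D_0+2CC_0 M^2=\tfrac M2+2CC_0M^2$, so $1\le \tfrac12+2CC_0 M$, forcing $C_0\mathcal D_0\ge c$ for an absolute constant $c$ — contradicting the hypothesis $C_0\mathcal D_0<\varepsilon_0$ once $\varepsilon_0$ is chosen small (depending only on $\bar\rho,\Omega$, which enter through the constants $C$). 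Hence $E_1(t)\le 2C\mathcal D_0$ for all $t\le T$, which is (ii).

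For $N=2$: here Lemma~\ref{lemL.1}(i) gives $E_1(t)\le C\mathcal D_0+CC_0 E_1(t)+CC_0^{1/2}\int_0^t\|u\|_\infty^2\|\nabla u\|_2^2\,ds+C\int_0^t\|\nabla^2 d\|_2^2\,ds$; if $C_0\le\varepsilon_0$ small the $CC_0 E_1$ and the $\nabla^2 d$ terms are absorbed, leaving $E_1(t)\le 2C\mathcal D_0+CC_0^{1/2}\int_0^t\|u\|_\infty^2\|\nabla u\|_2^2\,ds$. The key is to bound the logarithmic term: by Lemma~\ref{lemL.2} applied on $(s,t)$-subintervals with $f=u$ (using $u\in L^2(0,T;W^{2,6})\hookrightarrow L^2(0,T;W^{1,q})$ for $q>2$, so $\ln^+\|u\|_{L^2(s,t;W^{1,q})}\le C\ln^+(1+E_1(t))$), together with $\int_0^t\|\nabla u\|_2^2\,ds\le$ (basic energy, $\le C_0$ after using the $\Delta d$ improvement), one shows $\int_0^t\|u\|_\infty^2\|\nabla u\|_2^2\,ds$ is controlled by $\sup\|\nabla u\|_2^2\cdot\|u\|_{L^2(0,t;L^\infty)}^2\le C\mathcal D_0\cdot C(1+C_0\ln^+(1+E_1(t)))$, i.e. by $C\,\mathcal D_0\,(1+C_0\ln(1+E_1(t)))$ up to constants. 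Plugging back, $E_1(t)\le 2C\mathcal D_0+CC_0^{3/2}\mathcal D_0\ln(1+E_1(t))$; a Gronwall-type / elementary comparison (since $x\le a+b\ln(1+x)$ with $b$ small forces $x\le 2a+Cb$) then yields $E_1(t)\le C(1+C_0\mathcal D_0^2)$ after rechecking the exact powers — the stated form follows by keeping the $(\|\nabla u_0\|_2^2+\|\nabla^2 d_0\|_2^2)^2$ bookkeeping honest.

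I expect the main obstacle to be the 2D estimate of $\int_0^t\|u\|_\infty^2\|\nabla u\|_2^2\,ds$: one must apply the logarithmic Sobolev inequality of Lemma~\ref{lemL.2} on arbitrary subintervals, control the $W^{1,q}$-norm of $u$ by $E_1$ (hence by something like $E_1^{1/2}$ inside a logarithm), and then run a Gronwall argument of the type ``$E_1\le a+bE_1^{1/2}\ln(1+E_1)$ with $b$ small implies $E_1$ bounded polynomially in $a$'' — making sure the constant in front of $C_0$ there is genuinely small and $T$-independent, so that $\varepsilon_0$ depends only on $\bar\rho$ and $\Omega$. The 3D case is a clean quadratic continuity argument; the only care needed there is checking that each $I_j$-type contribution has been absorbed with a small coefficient $CC_0$ rather than merely a finite one, which Lemma~\ref{lemL.1}(ii) already packages for us.
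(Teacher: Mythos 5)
Your three–dimensional continuity argument is essentially the paper's (the paper phrases it via the two roots of $x\le C_*\mathcal D_0+C_*C_0x^2$ rather than a first-crossing contradiction, but that is cosmetic). The two places where the proposal genuinely breaks down are the treatment of $\int_0^t\|\nabla^2d\|_2^2\,ds$ and the closing of the 2D estimate.

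First, the residual term. Your proposed absorption $\|\nabla^2d\|_2^2\le C\|\nabla d\|_2^{2/N}\|\nabla^3d\|_2^{2-2/N}$ followed by Young's inequality leaves behind $C\int_0^t\|\nabla d\|_2^2\,ds\le CC_0\,t$, so the claim that the time integral can be rewritten as $\varepsilon E_1(t)+CC_0^\alpha E_1(t)$ is not correct: a factor of $t$ is unavoidable along that route, and $E_1$ cannot dominate $\int_0^t\|\nabla d\|_2^2\,ds$ without it. Ironically, the route you dismiss as $T$-dependent is the correct ($T$-independent) one: the first inequality of Lemma \ref{lemL.0} retains the dissipation $2\int_0^T\|\Delta d\|_2^2\,dt\le C_0+2\int_0^T\|\nabla d\|_4^4\,dt$, and since $\|\nabla^2d\|_2\le C\|\Delta d\|_2$ (zero Dirichlet data for $d-d_0^*$) one gets in 2D $\int_0^t\|\nabla d\|_4^4\,ds\le CC_0\int_0^t\|\nabla^2d\|_2^2\,ds$, which is absorbed for $C_0$ small, giving $\int_0^t\|\nabla^2d\|_2^2\,ds\le CC_0$ uniformly in $T$; in 3D the same identity plus Gagliardo--Nirenberg and Young gives $\int_0^t\|\nabla^2d\|_2^2\,ds\le C\mathcal D_0+CC_0E_1(t)^2$, which is exactly what feeds the quadratic continuity argument.

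Second, the 2D closure is circular as written: you bound $\int_0^t\|u\|_\infty^2\|\nabla u\|_2^2\,ds\le\sup_s\|\nabla u\|_2^2\cdot\|u\|_{L^2(0,t;L^\infty)}^2$ and then replace $\sup_s\|\nabla u\|_2^2$ by $C\mathcal D_0$, which is the conclusion you are trying to prove. Keeping $\sup_s\|\nabla u\|_2^2\le E_1(t)$ honestly yields $E_1\le C\mathcal D_0+CC_0^{1/2}E_1+CC_0^{3/2}E_1\ln(1+E_1)$, and the last term is superlinear in $E_1$ with a coefficient whose required smallness would have to depend on $\mathcal D_0$ (which is \emph{not} assumed small in 2D), so $\varepsilon_0$ could not be chosen depending only on $\bar\rho$ and $\Omega$. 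The paper avoids this by applying Gronwall's inequality directly to $E_1(t)\le C\mathcal D_0+CC_0^{1/2}\int_0^t\|u\|_\infty^2\|\nabla u\|_2^2\,ds$ with $\|\nabla u\|_2^2(s)\le E_1(s)$ kept \emph{inside} the integral: the exponential of $CC_0^{1/2}\int_0^t\|u\|_\infty^2\,ds\le CC_0^{1/2}\ln\bigl(1+\int_0^t\|\nabla^2u\|_2^2\,ds\bigr)$ becomes the small power $\bigl(1+\int_0^t\|\nabla^2u\|_2^2\,ds\bigr)^{CC_0^{1/2}}\le C(1+E_1(t))^{1/2}$ once $C_0\le\varepsilon_0$, after which Young's inequality absorbs $\frac12E_1(t)$ and produces the stated bound $C[1+C_0\mathcal D_0^2]$. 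That conversion of a multiplicative $E_1$ into a sublinear power of $E_1$ is the key idea your sketch is missing.
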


\begin{proof}
We first consider (i). It follows from Lemma \ref{lemL.0} and Ladyzhenskaya inequality that
\begin{align*}
\int_0^t\|\nabla^2d\|_2^2ds\leq&CC_0+C\int_0^t\|\nabla d\|_4^4ds\leq CC_0+C\sup_{0\leq s\leq t}\|\nabla d\|_2^2\int_0^t\|\nabla^2d\|_2^2ds\\
\leq&CC_0+CC_0\int_0^t\|\nabla^2d\|_2^2ds,
\end{align*}
and thus $\int_0^t\|\nabla^2d\|_2^2ds\leq CC_0$, provided $\varepsilon_0$ is small. On account of this fact,
using Poincar\'e inequality, it follows from Lemma \ref{lemL.1} that
\begin{equation*}
E_1(t)\leq C(\|\nabla u_0\|_2^2+\|\nabla^2d_0\|_2^2)+CC_0^{1/2}\int_0^t\|u\|_\infty^2\|\nabla u\|_2^2dt.
\end{equation*}
By Lemma \ref{lemL.2}, it follows from Poincar\'e inequality and Lemma \ref{lemL.0} that
\begin{align*}
\int_0^t\|u\|_\infty^2ds\leq& C\left(1+\int_0^t\|\nabla u\|_2^2dt\right)\ln[1+\int_0^t(\|\nabla u\|_2^2+\|\nabla^2u\|_2^2)ds]\\
\leq&C\ln(1+\int_0^t\|\nabla^2u\|_2^2ds).
\end{align*}
On account of this inequality, it follows from Gronwall inequality that
\begin{align*}
E_1(t)\leq&C(\|\nabla u_0\|_2^2+\|\nabla^2d_0\|_2^2)C_0^{1/2}\int_0^t\|u\|_\infty^2ds e^{CC_0^{1/2}\int_0^t\|u\|_\infty^2ds}\\
\leq&C(\|\nabla u_0\|_2^2+\|\nabla^2d_0\|_2^2)C_0^{1/2}\ln(1+\int_0^t\|\nabla^2u\|_2^2ds)(1+\int_0^t\|\nabla^2u\|_2^2ds)^{CC_0^{1/2}}\\
\leq&CC_0^{1/2}(\|\nabla u_0\|_2^2+\|\nabla^2d_0\|_2^2)(1+\int_0^t\|\nabla^2u\|_2^2ds)^{1/2}\\
\leq&\frac{1}{2}E_1(t)+CC_0^{1/2}(\|\nabla u_0\|_2^2+\|\nabla^2d_0\|_2^2)+CC_0(\|\nabla u_0\|_2^2+\|\nabla^2d_0\|_2^2)^2,
\end{align*}
provided $\varepsilon_0$ is small enough, and thus
$$
E_1(t)\leq C[1+C_0(\|\nabla u_0\|_2^2+\|\nabla^2d_0\|_2^2)^2],\quad t\in(0, T).
$$

Now, we prove (ii). Using Gagliado-Nirenberg, it follows from Lemma \ref{lemL.0} that
\begin{align*}
\int_0^t\|\nabla^2d\|_2^2ds\leq&CC_0+C\int_0^t\|\nabla d\|_4^4ds\leq CC_0+C\int_0^t\|\nabla d\|_2\|\nabla^2d\|_2^3ds\\
\leq&CC_0+C\int_0^t\|\nabla d\|_2\|\nabla^2d\|_2\|\nabla d\|_2\|\nabla^3d\|_2ds\\
\leq&CC_0+\frac{1}{2}\int_0^t\|\nabla^2d\|_2^2ds+C\int_0^t\|\nabla d\|_2^4\|\nabla^3d\|_2^2ds,
\end{align*}
and thus
\begin{align*}
\int_0^t\|\nabla^2d\|_2^2ds\leq&CC_0+CC_0\sup_{0\leq s\leq t}\|\nabla d\|_2^2\int_0^t\|\nabla^3d\|_2^2ds\\
\leq&C(\|\nabla u_0\|_2^2+\|\nabla^2d_0\|_2^2)+CC_0\sup_{0\leq s\leq t}\|\nabla^2d\|_2^2\int_0^t\|\nabla^3d\|_2^2ds\\
\leq&C(\|\nabla u_0\|_2^2+\|\nabla^2d_0\|_2^2)+CC_0E_1(t)^2,
\end{align*}
in the above we have used the fact that $\|u\|_2^2+\|\nabla d\|_2^2\leq C(\|\nabla u\|_2^2+\|\nabla^2d\|_2^2)$ guaranteed by Poincar\'e inequality and the boundary condition (\ref{1.6}). On account of the above inequality, by Lemma \ref{lemL.1}, there is a positive constant $C_*\geq 1$ depending only on $\bar\rho$ and $\Omega$, such that
$$
E_1(t)\leq C_*(\|\nabla u_0\|_2^2+\|\nabla^2d_0\|_2^2)+C_*C_0E_1(t)^2,
$$
which implies
$$
E_1(t)\leq\frac{1-\sqrt{1-4C_*^2C_0(\|\nabla u_0\|_2^2+\|\nabla^2d_0\|_2^2)}}{2C_*C_0},
$$
or
$$
E_1(t)\geq\frac{1+\sqrt{1-4C_*^2C_0(\|\nabla u_0\|_2^2+\|\nabla^2d_0\|_2^2)}}{2C_*C_0}
$$
Note that $E_1(t)$ is a nondecreasing and continuous function on $[0, T]$. Set $\varepsilon_0=\frac{1}{8C_*^2}$, then it has
$$
4C_*^2C_0(\|\nabla u_0\|_2^2+\|\nabla^2d_0\|_2^2)\leq 1/2.
$$
One can easily check that
$$
E_1(0)=\|\nabla u_0\|_2^2+\|\nabla^2d_0\|_2^2<\frac{1-\sqrt{1-4C_*^2C_0(\|\nabla u_0\|_2^2+\|\nabla^2d_0\|_2^2)}}{2C_*C_0}.
$$
Consequently, the continuity of $E_1(t)$ implies that
\begin{align*}
E_1(t)\leq&\frac{1-\sqrt{1-4C_*^2C_0(\|\nabla u_0\|_2^2+\|\nabla^2d_0\|_2^2)}}{2C_*C_0}\\
\leq&\frac{1-(1-4C_*^2C_0(\|\nabla u_0\|_2^2+\|\nabla^2d_0\|_2^2))}{2C_*C_0}=2C_*(\|\nabla u_0\|_2^2+\|\nabla^2d_0\|_2^2),
\end{align*}
the proof is complete.
\end{proof}

\begin{lemma} \label{lemE2}Let $(\rho,u,p,d)$ be a
strong solution to (\ref{1.1})--(\ref{1.6}) in $Q_T$. Set
\begin{align*}
E_2(t)=&\sup_{0\leq s\leq t}(\|\sqrt\rho u_t\|_2^2+\|\nabla^2u\|_2^2+\|\nabla p\|_2^2+\|\nabla^3d\|_2^2)\\
&+\int_0^t(\|\nabla u_t\|_2^2+\|d_{tt}\|_2^2+\|\nabla^2d_t\|_2^2)ds.
\end{align*}
Then there is a positive constant $\varepsilon_0$ depending only on $\overline\rho$ and $\Omega$, such that the following hold

(i) If $N=2$, then
$$
E_2(t)\leq C(1+\|g_0\|_2^2+\|\nabla^2u_0\|_2^2+\|\nabla^2d_0\|_{H^1}^2)^8e^{(1+\|\nabla u_0\|_2^2+\|\nabla^2d_0\|_2^2)^2}
$$
for all $t\in(0, T)$, provided $\|\sqrt{\rho_0}u_0\|_2^2+\|\nabla^2d_0\|_2^2\leq\varepsilon_0$.

(ii) If $N=3$, then
$$
E_2(t)\leq C[\|g_0\|_2^2+(1+\|\nabla u_0\|_2^2+\|\nabla d_0\|_{H^2}^2)^4]
$$
for all $t\in(0, T)$, provided $(\|\sqrt{\rho_0}u_0\|_2^2+\|\nabla^2d_0\|_2^2)(\|\nabla u_0\|_2^2+\|\nabla^2d_0\|_2^2)\leq\varepsilon_0$.
\end{lemma}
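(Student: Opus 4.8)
The plan is to estimate $E_2(t)$ by differentiating the momentum equation and the $d$-equation in time, testing against the natural multipliers, and then closing the argument using the bound on $E_1(t)$ already obtained in Lemma \ref{lemE1}. More precisely, I would first differentiate \eqref{1.2-1} in $t$, multiply by $u_t$ and integrate by parts. Using \eqref{1.1} to rewrite $\rho_t=-\mathrm{div}(\rho u)$, this produces
$$
\frac12\frac{d}{dt}\int_\Omega\rho|u_t|^2dx+\int_\Omega|\nabla u_t|^2dx
=-\int_\Omega\rho u\cdot\nabla\!\left(\tfrac12|u_t|^2+u_t\cdot(u\cdot\nabla)u\right)dx-\int_\Omega\rho u_t\cdot(u_t\cdot\nabla)u\,dx-\int_\Omega(\Delta d\cdot\nabla d)_t\cdot u_t\,dx.
$$
Each term on the right is estimated by H\"older, the Gagliardo--Nirenberg and Poincar\'e inequalities, and the $W^{2,q}$ Stokes estimate of Lemma \ref{est on stok}; every resulting factor should be absorbed either into $\varepsilon\|\nabla u_t\|_2^2+\varepsilon\|\nabla^2u\|_2^2$ or bounded by a power of $E_1(t)$ times the remaining quantities. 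For the direction field, I would differentiate \eqref{1.4} in $t$ and multiply by $d_{tt}$, and separately apply $\Delta$ to \eqref{1.4} and differentiate in $t$ (or multiply the $t$-differentiated equation by $-\Delta d_t$) to control $\|\nabla^2 d_t\|_2^2$ and $\|d_{tt}\|_2^2$; the boundary terms are handled exactly as in Lemma \ref{lemL.1}, using $\Delta d|_{\partial\Omega}=|\nabla d|^2 d$ and the trace inequality. Collecting these, one obtains a differential inequality of the schematic form
$$
\frac{d}{dt}\,\mathcal E(t)+c\big(\|\nabla u_t\|_2^2+\|d_{tt}\|_2^2+\|\nabla^2d_t\|_2^2\big)\le \Phi(E_1(t))\,\mathcal E(t)+\Psi(E_1(t)),
$$
where $\mathcal E(t)\approx\|\sqrt\rho u_t\|_2^2+\|\nabla^2u\|_2^2+\|\nabla p\|_2^2+\|\nabla^3d\|_2^2$, and $\Phi,\Psi$ are polynomials; Lemma \ref{lemE1} makes $\int_0^t\Phi(E_1)ds$ finite (indeed bounded by the asserted quantities), so Gronwall's inequality closes the estimate.

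The one genuinely delicate point is the treatment of the initial value $\mathcal E(0)$: one must show $\|\sqrt{\rho_0}u_t(0)\|_2$, $\|\nabla^2u_0\|_2$, $\|\nabla p_0\|_2$ are all controlled by $\|g_0\|_2$ and the data norms. This is where the compatibility condition in Theorem \ref{theorem1} enters: evaluating \eqref{1.2-1} at $t=0$ gives $\rho_0 u_t(0)=-\nu\Delta u_0-\nabla(p_0+\frac\lambda2|\nabla d_0|^2)-\lambda\Delta d_0\cdot\nabla d_0=\sqrt{\rho_0}\,g_0$ (after absorbing the gradient term into the pressure), hence $\|\sqrt{\rho_0}u_t(0)\|_2=\|g_0\|_2$, and then the Stokes estimate \eqref{L.2} at $t=0$ bounds $\|\nabla^2u_0\|_2+\|\nabla p_0\|_2$ by $\|g_0\|_2$ plus lower-order data.

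The main obstacle, and the reason the statement splits into two very different-looking bounds for $N=2$ and $N=3$, is exactly the term flagged in the introduction: in the energy identity for $u_t$ there appears $\int_0^t\|\nabla d\|_2^2\|u_t\|_2^2\,ds$ (coming from $(\Delta d\cdot\nabla d)_t\cdot u_t$ after integration by parts, when $\|\nabla d\|_\infty$ is traded for $\|\nabla d\|_2\|\nabla^2d\|_2$ and then $\|u_t\|$ via Poincar\'e). For $N=3$ this is harmless because Lemma \ref{lemE1}(ii) gives $E_1(t)\le 2C_*(\|\nabla u_0\|_2^2+\|\nabla^2d_0\|_2^2)$ with smallness, so the coefficient is uniformly small and one gets the clean bound in (ii). For $N=2$ only the \emph{basic} energy $C_0$ is small while $E_1(t)$ is merely bounded (by Lemma \ref{lemE1}(i), polynomially in the data), so this term cannot be absorbed; instead one keeps it on the right-hand side of Gronwall and pays for it with an exponential factor $e^{(1+\|\nabla u_0\|_2^2+\|\nabla^2d_0\|_2^2)^2}$, together with a high power of $(1+\|g_0\|_2^2+\|\nabla^2u_0\|_2^2+\|\nabla^2d_0\|_{H^1}^2)$ accumulated through the repeated use of Young's inequality on the cubic and quartic nonlinearities. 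Verifying that the exponents work out to exactly the $8$ and the squared data norm in (i) — rather than something worse — is the bookkeeping heart of the proof, but each individual estimate is routine given Lemmas \ref{est on ellip}, \ref{est on stok}, \ref{lemL.0}, \ref{lemL.1} and \ref{lemE1}.
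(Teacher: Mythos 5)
Your overall architecture for $N=3$ — time-differentiate \eqref{1.2}, test with $u_t$, time-differentiate \eqref{1.4}, combine, control the initial value $\|\sqrt{\rho_0}u_t(0)\|_2$ through the compatibility condition, and close with the smallness coming from Lemma \ref{lemE1}(ii) — is essentially the paper's proof of part (ii), up to the minor point that $\rho_0u_t(0)=\sqrt{\rho_0}g_0-\rho_0(u_0\cdot\nabla)u_0$, so $\|\sqrt{\rho_0}u_t(0)\|_2$ is $\|g_0\|_2$ plus a data-dependent convection term, not equal to $\|g_0\|_2$.

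The gap is in your treatment of $N=2$. You propose to time-differentiate \eqref{1.4} in both dimensions and to handle the resulting term $\int_0^t\|\nabla d\|_2^2\|u_t\|_2^2\,ds$ by ``keeping it on the right-hand side of Gronwall and paying with an exponential factor.'' This does not work. That term comes from $(u_t\cdot\nabla)d$ in the time-differentiated direction equation (not, as you write, from $(\Delta d\cdot\nabla d)_t\cdot u_t$ in the momentum equation, which only produces $|\nabla d|^2|\nabla d_t|^2$). After Poincar\'e it is bounded by $C\|\nabla d\|_2\|\nabla^2d\|_2\|\nabla u_t\|_2^2$, i.e.\ it is proportional to the \emph{dissipation} $\|\nabla u_t\|_2^2$, not to the quantity under the supremum; Gronwall's inequality is therefore inapplicable, and absorption into the left-hand side requires the coefficient $\|\nabla d\|_2\|\nabla^2d\|_2\lesssim C_0^{1/2}E_1(t)^{1/2}$ to be small. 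In 3D this product is small by Lemma \ref{lemE1}(ii), but in 2D only $C_0$ is small while $E_1(t)$ is merely bounded polynomially in the (possibly large) data, so the coefficient is neither small nor Gronwall-able — this is exactly the obstruction the introduction flags. The paper's proof of part (i) avoids the term altogether by never differentiating \eqref{1.4} in time: it applies $\Delta$ to \eqref{1.4}, squares, and estimates $\sup_{0\le s\le t}\|\nabla^3d\|_2^2+\int_0^t(\|\nabla^2d_t\|_2^2+\|\nabla^4d\|_2^2)\,ds$ (handling the boundary terms $\Delta d|_{\partial\Omega}=-|\nabla d|^2d_0^*$ via the trace inequality), then bounds $\int_0^t\|\nabla d_t\|_2^2\,ds$ directly from the equation; the exponential factor in (i) arises instead from the genuine Gronwall term $\int_0^t\|\nabla^2u\|_2^2\|\sqrt{\rho}u_t\|_2^2\,ds$ produced by $\rho|\nabla u||u_t|^2$ in the momentum estimate. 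Without this change of strategy for the $d$-equation, your 2D argument does not close.
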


\begin{proof}
Differentiating the momentum equation (\ref{1.2}) with respect to
$t$, multiplying the resulting equation by $u_t$ and integrating over $\Omega$ yields
\begin{align*}
&\frac{1}{2}\frac{d}{dt}\int_\Omega\rho|u_t|^2dx+\int_\Omega|\nabla u_t|^2dx\\
=&\int_\Omega(\nabla d_t\odot\nabla d+\nabla d\odot\nabla d_t):\nabla u_tdx-\int_\Omega\rho (u_t\cdot\nabla) u\cdot u_tdx\nonumber\\
&+\int_\Omega\textmd{div}(\rho u)(u_t+(u\cdot\nabla) u)\cdot u_tdx.
\end{align*}
It follows
\begin{align*}
&\int_\Omega\textmd{div}(\rho u)(u_t+(u\cdot\nabla) u)\cdot u_tdx=-\int_\Omega\Big\{\rho u\cdot\nabla(|u_t|^2)+\rho u\cdot\nabla[(u\cdot\nabla u)\cdot u_t] \Big\}dx\\
\leq&C\int_\Omega (\rho|u||\nabla u_t||u_t|+\rho|u||\nabla u|^2|u_t|+\rho|u|^2|\nabla^2u||u_t|+\rho|u|^2|\nabla u||\nabla u_t|)dx.
\end{align*}
Combining the above two inequalities yields
\begin{align}
&\frac{d}{dt}\int_\Omega\rho|u_t|^2dx+\int_\Omega|\nabla u_t|^2dx\nonumber\\
\leq&C\int_\Omega(|\nabla d|^2|\nabla d_t|^2+\rho|u|^2|u_t|^2+\rho|u||\nabla u|^2|u_t|\nonumber\\
&+\rho|u|^2|\nabla^2u||u_t|+\rho|u|^4|\nabla u|^2+\rho |u_t|^2|\nabla u|)dx.\label{M.1}
\end{align}

Sine the estimates on $d$ are different for the case $N=2$ and the case $N=3$, we prove (i) and (ii) separately.

(i) The case that $N=2$. Taking $\Delta$ on both sides of equation (\ref{1.4}), there holds
\begin{align*}
\Delta d_t-\Delta^2d=&|\nabla d|^2\Delta d+2(\nabla d:\nabla\Delta d) d+2|\nabla^2d|^2d\\
&-(u\cdot\nabla)\Delta d-2(\nabla u_i\cdot\partial_i\nabla)d-(\Delta u\cdot\nabla)d.
\end{align*}
Squaring both sides of the above equation and integrating over $\Omega$ yields
\begin{align}
&-2\int_\Omega\Delta d_t\Delta^2ddx+\int_\Omega(|\Delta d_t|^2+|\Delta^2d|^2)dx\nonumber\\
\leq&C\int_\Omega(|u|^2|\nabla\Delta d|^2+|\nabla u|^2|\nabla^2d|^2+|\nabla d|^2|\Delta u|^2\nonumber\\
&+|\nabla d|^4|\Delta d|^2+|\nabla d|^2|\nabla\Delta d|^2+|\nabla^2d|^4)dx.\label{N.1}
\end{align}

By equation (\ref{1.4}) and the boundary value condition (\ref{1.6}), there holds
$$
\Delta d|_{\partial\Omega}=-|\nabla d|^2d|_{\partial\Omega}=-d_0^*|\nabla d|^2|_{\partial\Omega}.
$$
By the aid of this boundary value condition, it follow from elliptic estimates and Ladyzhenskaya inequality that
\begin{align*}
\int_\Omega|\nabla^4d|^2dx=& \int_\Omega\|\nabla^4 (d-d_0^*)\|_{2}^2dx\leq C\|\Delta(d-d_*)\|_{H^2}^2\\
=&C\|\Delta d\|_{H^2}^2\leq C(\|\Delta^2d\|_2^2+\||\nabla d|^2\|_{H^2}^2)\leq C(\|\Delta^2d\|_2^2+\||\nabla d|^2\|_2^2+\|\nabla^2|\nabla d|^2\|_{2}^2)\\
\leq&C(\|\Delta^2d\|_2^2+\|\nabla d\|_4^4+\||\nabla d|\nabla^3d\|_2^2+\||\nabla^2d|^2\|_2^2)\\
\leq& C(\|\Delta^2d\|_2^2+\|\nabla d\|_4^4+\|\nabla d\|_4^2\|\nabla^3d\|_4^2+\|\nabla^2d\|_4^4)\\
\leq&C(\|\Delta^2d\|_2^2+\|\nabla d\|_2^2\|\nabla^2 d\|_2^2+\|\nabla d\|_2\|\nabla^2 d\|_2\|\nabla^3 d\|_2\|\nabla^4d\|_2+\|\nabla^2d\|_2^2\|\nabla^3 d\|_2^2)\\
\leq&\frac{1}{2}\|\nabla^4d\|_2^2+C(\|\Delta^2d\|_2^2+\|\nabla d\|_2^2\|\nabla^2 d\|_2^2\\
&+\|\nabla d\|_2^2\|\nabla^2 d\|_2^2\|\nabla^3 d\|_2^2+\|\nabla^2d\|_2^2\|\nabla^3 d\|_2^2),
\end{align*}
and thus it follows from Lemma \ref{lemE1} that
\begin{equation}
\|\nabla^4d\|_2^2\leq C\|\Delta^2d\|_2^2+C(1+\|\nabla u_0\|_2^2+\|\nabla^2d_0\|_2^2)^2(1+\|\nabla^3d\|_2^2).\label{N.2}
\end{equation}
Note that the following boundary condition holds true
$$
\Delta d_t|_{\partial\Omega}=-(|\nabla d|^2d)_t|_{\partial\Omega}.
$$
Integration by parts, it follows
\begin{align}
-2\int_\Omega\Delta d_t\Delta^2ddx=&-2\int_{\partial\Omega}\frac{\partial}{\partial n}\Delta d\cdot\Delta d_tdS+2\int_\Omega\nabla\Delta d_t\nabla\Delta ddx\nonumber\\
=&\frac{d}{dt}\int_\Omega|\nabla\Delta d|^2dx+2\int_{\partial\Omega}\frac{\partial}{\partial n}\Delta d(|\nabla d|^2d)_tdS.\label{N.2-1}
\end{align}
Recalling the trace inequality
$$
\|f\|_{L^1(\partial\Omega)}\leq C\|f\|_{W^{1,1}(\Omega)},
$$
there holds
\allowdisplaybreaks\begin{align*}
&\int_{\partial\Omega}\frac{\partial}{\partial n}\Delta d(|\nabla d|^2d)_tdS=\int_{\partial\Omega}\frac{\partial}{\partial n}\Delta d(|\nabla d|^2d_0^*)_tdS\\
=&2\int_{\partial\Omega}\frac{\partial}{\partial n}\Delta d\nabla d:\nabla d_td_0^*dS=2\int_{\partial\Omega}\frac{\partial}{\partial n}\Delta d\cdot d\nabla d:\nabla d_t dS\\
=&\int_{\partial\Omega}\left(\frac{\partial}{\partial n}(\Delta d\cdot d)-\Delta d\frac{\partial}{\partial n}d\right)2\nabla d:\nabla d_tdS\\
=&-\int_{\partial\Omega}\left(\frac{\partial}{\partial n}|\nabla d|^2+\Delta d\frac{\partial}{\partial n}d\right)2\nabla d:\nabla d_tdS\\
\geq&-C\int_{\partial\Omega}|\nabla^2d||\nabla d|^2|\nabla d_t|dS\geq-C\||\nabla^2d||\nabla d|^2|\nabla d_t|\|_{W^{1,1}(\Omega)}\\
\geq&-C\int_\Omega(|\nabla^2d||\nabla d|^2|\nabla d_t|+|\nabla^3d||\nabla d|^2|\nabla d_t|\\
&+|\nabla^2d|^2|\nabla d||\nabla d_t|+|\nabla^2d||\nabla d|^2|\nabla^2d_t|)dx\\
\geq&-\varepsilon\|\nabla^2d_t\|_2^2-C\int_\Omega(|\nabla d|^2|\nabla^2d||\nabla d_t|+|\nabla d|^2|\nabla^3d||\nabla d_t|
\\&+|\nabla d||\nabla^2d|^2|\nabla d_t|+|\nabla d|^4|\nabla^2d|^2)dx.
\end{align*}

Combining the above inequality with (\ref{N.1})--(\ref{N.2-1}) and using Lemma \ref{lemE1}, we obtain
\begin{align}
&\sup_{0\leq s\leq t}\|\nabla^3d\|_2^2+(\|\nabla^2d_t\|_2^2+\|\nabla^4d\|_2^2)\nonumber\\
\leq&C[\|\nabla\Delta d_0\|_2^2+1+(\|\nabla^2u_0\|_2^2+\|\nabla^2d_0\|_2^2)^4]+C\int_0^t[|u|^2|\nabla^3d|^2\nonumber\\
&+|\nabla d|^2(|\nabla^2u|^2+|\nabla^3d|^2+|\nabla^2d||\nabla d_t|+|\nabla^3d||\nabla d_t|)\nonumber\\
&+|\nabla u|^2|\nabla^2d|^2+|\nabla^2d|^4+|\nabla d|^4|\nabla^2d|^2+|\nabla d||\nabla^2d||\nabla d_t|]dxds.\label{N.4}
\end{align}
By Lemma \ref{lemL.0}, it follows from Ladyzhenskaya inequality that
\begin{align*}
&\int_0^t\|\nabla^2d\|_2^2ds\leq C\int_0^t\|\Delta d\|_2^2ds\leq CC_0+C\int_0^t\|\nabla d\|_4^4ds\\
\leq& CC_0+C\sup_{0\leq s\leq t}\|\nabla d\|_2^2\int_0^t\|\nabla^2d\|_2^2ds\leq CC_0+CC_0\int_0^t\|\nabla^2d\|_2^2ds,
\end{align*}
and thus
$$
\int_0^t\|\nabla^2d\|_2^2ds\leq CC_0,
$$
provided $\varepsilon_0$ is small enough.
On account of this inequality, using Ladyzhenskaya and Gagliado-Nirenberg inequality, by Lemma \ref{lemL.0}, we estimate the terms on the right hand side of inequality (\ref{N.4}) as follows
\begin{align*}
I_1=&\int_0^t\int_\Omega|u|^2|\nabla^3d|^2dxds\leq\int_0^t\|u\|_4^2\|\nabla^3d\|_4^2ds\\
\leq& C\int_0^t\|\nabla u\|_2^2\|\nabla^3d\|_2\|\nabla^4d\|_2ds\leq \varepsilon\int_0^t\|\nabla^4d\|_2^2ds+CE_1(t)^3,\\
I_2=&\int_0^t\int_\Omega|\nabla d|^2(|\nabla^3d|^2+|\nabla^2d||\nabla d_t|+|\nabla^3d||\nabla d_t|)dxds\\
\leq&C\int_0^t\|\nabla d\|_4^2(\|\nabla^3d\|_4^2+\|\nabla d_t\|_4^2+\|\nabla^2d\|_4^2)ds\\
\leq&C\int_0^t\|\nabla d\|_2\|\nabla^2d\|_2(\|\nabla^3d\|_2\|\nabla^4d\|_2+\|\nabla d_t\|_2\|\nabla^2d_t\|_2+\|\nabla^2d\|_2\|\nabla^3d\|_2)ds\\
\leq&\varepsilon\int_0^t(\|\nabla^4d\|_2^2+\|\nabla^2d_t\|_2^2)ds+C\int_0^t\|\nabla^2d\|_2^2(\|\nabla^3d\|_2^2+\|\nabla d_t\|_2^2+\|\nabla^2d\|_2^2)ds\\
\leq&\varepsilon\int_0^t(\|\nabla^4d\|_2^2+\|\nabla^2d_t\|_2^2)ds+C(E_1(t)+E_1(t)^2)+E_1(t)\int_0^t\|\nabla d_t\|_2^2ds,\\
I_3=&\int_0^t\int_\Omega|\nabla d|^2|\nabla^2u|^2dxds\leq C\int_0^t\|\nabla d\|_\infty^2\|\nabla^2u\|_2^2ds\\
\leq& C\int_0^t\|\nabla d\|_2\|\|\nabla^3d\|_2\|\nabla^2u\|_2^2ds\leq\varepsilon\sup_{0\leq s\leq t}\|\nabla^3d\|_2^2+C\left(\int_0^t\|\nabla^2u\|_2^2ds\right)^2\\
\leq&\varepsilon\sup_{0\leq s\leq t}\|\nabla^3d\|_2^2+CE_1(t)^2,\\
I_4=&\int_0^t\int_\Omega(|\nabla u|^2|\nabla^2d|^2+|\nabla^2d|^4)dxds\leq\int_0^t(\|\nabla u\|_4^2\|\nabla^2d\|_4^2+\|\nabla^2 d\|_4^4)dx\\
\leq&C\int_0^t(\|\nabla u\|_2\|\nabla^2d\|_2\|\nabla^2u\|_2\|\nabla^3d\|_2+\|\nabla^2d\|_2^2\|\nabla^3d\|_2^2)ds\leq CE_1(t)^2,\\
I_5=&\int_0^t\int_\Omega|\nabla d|^4|\nabla^2d|^2dxds\leq \int_0^t\|\nabla d\|_8^4\|\nabla^2d\|_4^2ds\\
\leq&\int_0^t\|\nabla d\|_2\|\nabla^2d\|_2^3\|\nabla^2d\|_2\|\nabla^3d\|_2^2ds\leq CE_1(t)^3,\\
I_6=&\int_0^t\int_\Omega|\nabla d||\nabla^2d||\nabla d_t|dxds\leq \int_0^t\|\nabla d\|_4\|\nabla^2d\|_4\|\nabla d_t\|_2ds\\
\leq&C\int_0^t\|\nabla d\|_2^{1/2}\|\nabla^2d\|_2^{1/2}\|\nabla^2d\|_2^{1/2}\|\nabla^3d\|_2^{1/2}\|\nabla d_t\|_2ds\\
\leq&C\int_0^t(\|\nabla^2d\|_2^4+\|\nabla^3d\|_2^2+\|\nabla d_t\|_2^2)ds\leq CE_1(t)+C\int_0^t\|\nabla d_t\|_2^2ds.
\end{align*}
Substituting all the above inequalities into (\ref{N.4}) yields
\begin{align}
&\sup_{0\leq s\leq t}\|\nabla^3d\|_2^2+(\|\nabla^2d_t\|_2^2+\|\nabla^4d\|_2^2)\nonumber\\
\leq&C[\|\nabla\Delta d_0\|_2^2+1+(\|\nabla^2u_0\|_2^2+\|\nabla^2d_0\|_2^2)^4]+CE_1(t))^3\nonumber\\
&+C(1+E_1(t))\int_0^t\|\nabla d_t\|_2^2ds.\label{N.5}
\end{align}
By equation (\ref{1.4}), using Ladyzhenskaya inequality and Sobolev inequality, recalling that
$$
\int_0^t\|\nabla^2d\|_2^2ds\leq CC_0,
$$
it follows from Lemma \ref{lemE1} and Lemma \ref{lemL.0} that
\begin{align}
\int_0^t\|\nabla d_t\|_2^2ds\leq&C\int_0^t(\|\nabla^3d\|_2^2+\|\nabla(u\cdot\nabla d-|\nabla d|^2d)\|_2^2)ds\nonumber\\
\leq&C\int_0^t(\|\nabla^3d\|_2^2+\|u\|_4^2\|\nabla^2d\|_4^2+\|\nabla u\|_4^2\|\nabla d\|_4^2+\|\nabla d\|_6^6+\|\nabla d\|_4^2\|\nabla^2d\|_4^2)ds\nonumber\\
\leq&C\int_0^t(\|\nabla^3d\|_2^2+\|\nabla u\|_2^2\|\nabla^2d\|_2\|\nabla^3d\|_2+\|\nabla u\|_2\|\nabla^2u\|_2\|\nabla d\|_2\|\nabla^2d\|_2\nonumber\\
&+\|\nabla d\|_2^2\|\nabla^2d\|_2^4+\|\nabla d\|_2\|\nabla^2d\|_2^2\|\nabla^3d\|_2)ds\nonumber\\
\leq&C\int_0^t(\|\nabla^3d\|_2^2+\|\nabla^2u\|_2^2+\|\nabla u\|_2^4\|\nabla^2d\|_2^2\nonumber\\
&+\|\nabla u\|_2^2\|\nabla^2d\|_2^4+\|\nabla^2d\|_2^2+\|\nabla^2d\|_2^4)ds\leq C(1+E_1(t)^2),\label{N.6}
\end{align}
from which, recalling (\ref{N.5}) and using Lemma \ref{lemE1}, we arrive
\begin{align}
&\sup_{0\leq s\leq t}\|\nabla^3d\|_2^2+\int_0^t(\|\nabla^2d_t\|_2^2+\|\nabla^4d\|_2^2)ds\nonumber\\
\leq&C[\|\nabla\Delta d_0\|_2^2+(1+\|\nabla^2u_0\|_2^2+\|\nabla^2d_0\|_2^2)^6].\label{N.7}
\end{align}

By (\ref{M.1}), it has
\begin{align}
&\sup_{0\leq s\leq t}\|\sqrt\rho u_t\|_2^2+\int_0^t\|\nabla u_t\|_2^2\nonumber\\
\leq&\|\sqrt{\rho_0}u_t(0)\|_2^2+\int_0^t\int_\Omega(|\nabla d|^2|\nabla d_t|^2+\rho|u|^2|u_t|^2+\rho|u|^2|\nabla^2u||u_t|\nonumber\\
&+\rho |\nabla u||u_t|^2
+\rho|u||\nabla u|^2|u_t|+\rho|u|^4|\nabla u|^2)dxds.\label{N.8}
\end{align}
By Ladyzhenskaya inequality, Sobolev embedding inequality and recalling (\ref{N.6}), we can estimate the terms on the right hand side of the above inequality as follows
\begin{align*}
I_1\leq&\int_0^t\int_\Omega(|\nabla d|^2|\nabla d_t|^2+\rho|u|^2|u_t|^2+\rho|u|^2|\nabla^2u||u_t|)dxds\\
\leq&C\int_0^t(\|\nabla d\|_4^2\|\nabla d_t\|_4^2+\|u\|_8^2\|\sqrt\rho u_t\|_2\|u_t\|_4+\|u\|_8\|\nabla^2u\|_2\|u_t\|_4)ds\\
\leq&C\int_0^t(\|\nabla d\|_2\|\nabla^2d\|_2\|\nabla d_t\|_2\|\nabla^2d_t\|_2+\|\nabla u\|_2^2\|\sqrt\rho u_t\|_2\|\nabla u_t\|_2\\
&+\|\nabla u\|_2\|\nabla^2u\|_2\|\nabla u_t\|_2)ds\\
\leq&\varepsilon\int_0^t(\|\nabla^2d_t\|_2^2+\|\nabla u_t\|_2^2)ds+C\int_0^t(\|\nabla d\|_2^2\|\nabla^2d\|_2^2\|\nabla d_t\|_2^2\\
&+\|\nabla u\|_2^4\|\sqrt\rho u_t\|_2^2+\|\nabla u\|_2^2\|\nabla^2u\|_2^2)ds\\
\leq&\varepsilon\int_0^t(\|\nabla^2d_t\|_2^2+\|\nabla u_t\|_2^2)ds+C(1+E_1(t)^3),\\
I_2=&\int_0^t\int_\Omega\rho|\nabla u||u_t|^2dxds\leq C\int_0^t\|\nabla u\|_4\|\sqrt\rho u_t\|_2\|u_t\|_4ds\\
\leq&C\int_0^t\|\nabla u\|_2^{1/2}\|\nabla^2u\|_2^{1/2}\|\sqrt\rho u_t\|_2\|\nabla u_t\|_2ds\\
\leq&\varepsilon\int_0^t\|\nabla u_t\|_2^2ds+C\int_0^t\|\nabla u\|_2\|\nabla^2u\|_2\|\sqrt\rho u_t\|_2^2ds\\
\leq&\varepsilon\int_0^t\|\nabla u_t\|_2^2ds+C\int_0^t(\|\nabla u\|_2^2+\|\nabla^2u\|_2^2)\|\sqrt\rho u_t\|_2^2ds\\
\leq&\varepsilon\int_0^t\|\nabla u_t\|_2^2ds+C\int_0^t\|\nabla^2u\|_2^2\|\sqrt\rho u_t\|_2^2ds+CE_1(t)^2,\\
I_3=&\int_0^t\int_\Omega\rho|u||\nabla u|^2|u_t|ds\leq C\int_0^t\|u\|_4\|\nabla u\|_4^2\|u_t\|_4ds\\
\leq&C\int_0^t\|\nabla u\|_2^2\|\nabla^2u\|_2\|\nabla u_t\|_2ds\leq\varepsilon\int_0^t\|\nabla u_t\|_2^2ds+ C\int_0^t\|\nabla u\|_2^4\|\nabla^2u\|_2^2ds\\
\leq&\varepsilon\int_0^t\|\nabla u_t\|_2^2ds+C E_1(t)^3,\\
I_4=&\int_0^t\int_\Omega\rho|u|^4|\nabla u|^2dxds\leq C\int_0^t\|u\|_8^4\|\nabla u\|_4^2ds\\
\leq&C\int_0^t\|\nabla u\|_2^4\|\nabla u\|_2\|\nabla^2u\|_2ds\leq C\int_0^t\|\nabla u\|_2^4(\|\nabla u\|_2^2+\|\nabla^2u\|_2^2)ds\\
\leq&C((1+E_1(t)^3).
\end{align*}
Substituting these inequalities into (\ref{N.8}), using the inequality (\ref{N.7}) and recalling the compatible condition, it follows from Lemma \ref{lemE1} that
\begin{align*}
&\sup_{0\leq s\leq t}\|\sqrt\rho u_t\|_2^2+\int_0^t\|\nabla u_t\|_2^2ds\\
\leq&C[\|g_0\|_2^2+\|\nabla\Delta d_0\|_2^2+(1+\|\nabla^2u_0\|_2^2+\|\nabla^2d_0\|_2^2)^6]+C\int_0^t\|\nabla^2u\|_2^2\|\sqrt\rho u_t\|_2^2ds,
\end{align*}
from which, by Gronwall inequality, we deduce
\begin{align}
&\sup_{0\leq s\leq t}\|\sqrt\rho u_t\|_2^2+\int_0^t\|\nabla u_t\|_2^2ds\nonumber\\
\leq&C[\|\nabla\Delta d_0\|_2^2+(1+\|\nabla^2u_0\|_2^2+\|\nabla^2d_0\|_2^2)^6]e^{C\int_0^t\|\nabla^2u\|_2^2ds}\int_0^t\|\nabla^2u\|_2^2ds\nonumber\\
\leq&C(1+\|g_0\|_2^2+\|\nabla^2u_0\|_2^2+\|\nabla^2d_0\|_{H^1}^2)^6(1+\|\nabla u_0\|_2^2+\|\nabla^2d_0\|_2^2)^2e^{(1+\|\nabla u_0\|_2^2+\|\nabla^2d_0\|_2^2)^2}\nonumber\\
\leq&C(1+\|g_0\|_2^2+\|\nabla^2u_0\|_2^2+\|\nabla^2d_0\|_{H^1}^2)^8e^{(1+\|\nabla u_0\|_2^2+\|\nabla^2d_0\|_2^2)^2}.\label{N.9}
\end{align}

Elliptic estimates to Stokes equations give
\begin{align*}
\|\nabla^2u\|_2^2+\|\nabla p\|_2^2\leq&C(\|\sqrt\rho u_t\|_2^2+\|\rho u|\nabla u|\|_2^2+\|\nabla d|\nabla^2d|\|_2^2)\\
\leq&C(\|\sqrt\rho u_t\|_2^2+\|u\|_4^2\|\nabla u\|_4^2+\|\nabla d\|_4^2\|\nabla^2d\|_4^2)\\
\leq&C(\|\sqrt\rho u_t\|_2^2+\|\nabla u\|_2^2\|\nabla u\|_2\|\nabla^2u\|_2+\|\nabla d\|_2\|\nabla^2d\|_2^2\|\nabla^3d\|_2)\\
\leq&\varepsilon(\|\nabla^2u\|_2^2+\|\nabla^3d\|_2^2)+C(\|\sqrt\rho u_t\|_2^2+\|\nabla u\|_2^6+\|\nabla^2d\|_2^4),
\end{align*}
which, combined with (\ref{N.7}), together with (\ref{N.9}), we obtain
\begin{align*}
&\sup_{0\leq s\leq t}(\|\sqrt\rho u_t\|_2^2+\|\nabla^2u\|_2^2+\|\nabla p\|_2+\|\nabla^3d\|_2^2)+\int_0^t(\|\nabla u_t\|_2^2+\|\nabla^2d_t\|_2^2+\|\nabla^4d\|_2^2)ds\\
\leq&C(1+\|g_0\|_2^2+\|\nabla^2u_0\|_2^2+\|\nabla^2d_0\|_{H^1}^2)^8e^{(1+\|\nabla u_0\|_2^2+\|\nabla^2d_0\|_2^2)^2}.
\end{align*}

(ii) The case that $N=3$. Differentiate equation (\ref{1.4}) with respect to $t$, then it has
\begin{equation*}
d_{tt}-\Delta d_t=|\nabla d|^2d_t-(u\cdot\nabla) d_t+2(\nabla d:\nabla d_t) d-(u_t\cdot \nabla )d.
\end{equation*}
Square both sides of this equation and integration by parts, using Young inequality, we have
\begin{align}
&\frac{d}{dt}\int_\Omega|\nabla d_t|^2dx+\int_\Omega(|d_{tt}|^2+|\Delta d_t|^2)dx\nonumber\\
\leq&C\int_\Omega(|\nabla d|^4|d_t|^2+|u|^2|\nabla d_t|^2+|\nabla d|^2|\nabla d_t|^2+|\nabla d|^2|u_t|^2)dx.
\end{align}
Combining this inequality with (\ref{M.1}), we obtain
\begin{align}
&\sup_{0\leq s\leq t}(\|\sqrt\rho|u_t\|_2^2+\|\nabla d_t\|_2^2)+\int_0^t(\|\nabla u_t\|_2^2+\|d_{tt}\|_2^2+\|\Delta d_t\|_2^2)ds\nonumber\\
\leq&(\|\sqrt{\rho_0}u_t(0)\|_2^2+\|\nabla d_t(0)\|_2^2)+C\int_0^t\int_\Omega(\rho|u|^2|u_t|^2+\rho|u|^2|\nabla^2u||u_t|+|u|^2|\nabla d_t|^2+|\nabla d|^2|u_t|^2\nonumber\\
&+|\nabla d|^2|\nabla d_t|^2+\rho|u|^4|\nabla u|^2+|\nabla d|^4|d_t|^2+\rho|u||\nabla u|^2|u_t|+\rho |u_t|^2|\nabla u|)dxdt\nonumber\\
=&(\|\sqrt{\rho_0}u_t(0)\|_2^2+\|\nabla d_t(0)\|_2^2)+C\sum_{i=1}^9I_i.\label{M.2}
\end{align}

By Lemma \ref{lemE1}, Sobolev embedding inequality and H\"older inequality, we estimate the term on the right hand side as follows
\allowdisplaybreaks\begin{align*}
I_1\leq&C\int_0^t\|u\|_6^2\|\sqrt\rho u_t\|_2\|u_t\|_6ds\leq C\int_0^t\|\nabla u\|_2^2\|\sqrt\rho u_t\|_2\|\nabla u_t\|_2ds\\
\leq&\varepsilon\int_0^t\|\nabla u_t\|_2^2ds+C\int_0^t\|\nabla u\|_2^4\|\sqrt\rho u_t\|_2^2ds\\
\leq&\varepsilon\int_0^t\|\nabla u_t\|_2^2ds+C(\|\nabla u_0\|_2^2+\|\nabla^2d_0\|_2^2)^3,\\
I_2\leq&C\int_0^t\|u\|_6^2\|\nabla^2u\|_2\|u_t\|_6ds\leq C\int_0^t\|\nabla u\|_2^2\|\nabla^2u\|_2\|\nabla u_t\|_2ds\\
\leq&\varepsilon\int_0^t\|\nabla u_t\|_2^2ds+C(\|\nabla u_0\|_2^2+\||\nabla^2d_0\|_2^2)^3,\\
I_3\leq&C\int_0^t\|u\|_6^2\|\nabla d_t\|_2\|\nabla d_t\|_6ds\leq C\int_0^t\|\nabla u\|_2^2\|\nabla d_t\|_2\|\nabla^2d_t\|_2ds\\
\leq&\varepsilon\int_0^t\|\nabla^2d_t\|_2^2ds+C(\|\nabla u_0\|_2^2+\|\nabla^2d_0\|_2^2)^2\int_0^t\|\nabla d_t\|_2^2ds,\\
I_4\leq&C\int_0^t\|\nabla d\|_2\|\nabla d\|_6\|u_t\|_6^2ds\leq C\int_0^t\|\nabla d\|_2\|\nabla^2d\|_2\|\nabla u_t\|_2^2ds\\
\leq&CC_0^{1/2}(\|\nabla u_0\|_2^2+\|\nabla^2d_0\|_2^2)^{1/2}\int_0^t\|\nabla d_t\|_2^2ds\leq C\varepsilon_0^{1/2}\int_0^t\|\nabla u_t\|_2^2ds,\\
I_5\leq&\int_0^t\|\nabla d\|_6^2\|\nabla d_t\|_2\|\nabla d_t\|_6ds\leq C\int_0^t\|\nabla^2d\|_2^2\|\nabla d_t\|_2\|\nabla^2d_t\|_2ds\\
\leq&\varepsilon\int_0^t\|\nabla^2d_t\|_2^2ds+C\int_0^t\|\nabla^2d\|_2^4\|\nabla^2d_t\|_2^2ds\\
\leq&\varepsilon\int_0^t\|\nabla^2d_t\|_2^2ds+C(\|\nabla u_0\|_2^2+\|\nabla^2d_0\|_2^2)^2\int_0^t\|\nabla d_t\|_2^2ds,\\
I_6\leq&C\int_0^t\|u\|_6^4\|\nabla u\|_6^2ds\leq C\int_0^t\|\nabla u\|_2^4\|\nabla^2u\|_2^2ds\leq C(\|\nabla u_0\|_2^2+\|\nabla^2d_0\|_2^2)^3,\\
I_7\leq&\int_0^t\|\nabla d\|_6^4\|d_t\|_6^2ds\leq C\int_0^t\|\nabla^2d\|_2^4\|\nabla d_t\|_2^2ds\\
\leq&C(\|\nabla u_0\|_2^2+\|\nabla^2d_0\|_2^2)^2\int_0^t\|\nabla d_t\|_2^2ds,\\
I_8\leq&C\int_0^t\|\sqrt\rho u\|_2\|\nabla u\|_6^2\|u_t\|_6ds\leq C\int_0^t\|\sqrt\rho u\|_2^2\|\nabla^2u\|_2^2\|\nabla u_t\|_2ds\\
\leq&\varepsilon\int_0^t\|\nabla u_t\|_2^2ds+C\int_0^t\|\sqrt\rho u\|_2^2\|\nabla^2u\|_2^4ds\\
\leq&\varepsilon\int_0^t\|\nabla u_t\|_2^2ds+C(\|\nabla u_0\|_2^2+\|\nabla^2d_0\|_2^2)^3,\\
I_9\leq&C\int_0^t\|\sqrt\rho u_t\|_2\|u_t\|_6\|\nabla u\|_2^{1/2}\|\nabla^2u\|_2^{1/2}ds\\
\leq& C\int_0^t\|\sqrt\rho u_t\|_2\|\nabla u_t\|_2\|\nabla u\|_2^{1/2}\|\nabla^2u\|_2^{1/2}ds\\
\leq&\varepsilon\int_0^t\|\nabla u_t\|_2^2ds+C\int_0^t\|\sqrt\rho u_t\|_2^2\|\nabla u\|_2\|\nabla^2u\|_2ds\\
\leq&\varepsilon\int_0^t\|\nabla u_t\|_2^2ds+C\left(\int_0^t\|\nabla u\|_2^2ds\right)^{1/2}\left(\int_0^t\|\nabla^2u\|_2^2ds\right)^{1/2}\sup_{0\leq s\leq t}\|\sqrt\rho u_t\|_2^2\\
\leq&\varepsilon\int_0^t\|\nabla u_t\|_2^2ds+CC_0^{1/2}(\|\nabla u_0\|_2^2+\|\nabla^2d_0\|_2^2)^{1/2}\sup_{0\leq s\leq t}\|\sqrt\rho u_t\|_2^2\\
\leq&\varepsilon\int_0^t\|\nabla u_t\|_2^2ds+C\varepsilon_0^{1/2}\sup_{0\leq s\leq t}\|\sqrt\rho u_t\|_2^2.
\end{align*}
Substituting all these inequalities into (\ref{M.2}) gives
\begin{align}
&\sup_{0\leq s\leq t}(\|\sqrt\rho u_t\|_2^2+\|\nabla d_t\|_2^2)+\int_0^t(\|\nabla u_t\|_2^2+\|d_{tt}\|_2^2+\|\nabla^2d_t\|_2^2)ds\nonumber\\
\leq&(\|\sqrt{\rho_0}u_t(0)\|_2^2+\|\nabla d_t(0)\|_2^2)+C(\|\nabla u_0\|_2^2+\|\nabla^2d_0\|_2^2)^3+C(\|\nabla u_0\|_2^2\nonumber\\
&+\|\nabla^2d_0\|_2^2)^2\int_0^t\|\nabla d_t\|_2^2ds+CC_0(\|\nabla u_0\|_2^2+\|\nabla^2d_0\|_2^2)^2,\label{M.3}
\end{align}
provided $\varepsilon_0$ is small enough. Using Gagliado-Nirenberg inequality  and equation (\ref{1.4}) that
\begin{align}
\|\nabla(\nabla d|^2d-u\cdot\nabla d)\|_2^2
\leq&(\||u|\nabla^2d\|_2^2+\||\nabla u|\nabla d\|_2^2+\|\nabla d\|_6^6+\|\nabla d|\nabla^2d|\|_2^2)\nonumber\\
\leq&C(\|u\|_6^2\|\nabla^2d\|_2\|\nabla^2d\|_6+\|\nabla d\|_6^2\|\nabla u\|_2\|\nabla u\|_6\nonumber\\
&+\|\nabla^2d\|_2^6+\|\nabla d\|_6^2\|\nabla^2d\|_2\|\nabla^2d\|_6^2)\nonumber\\
\leq&C(\|\nabla u\|_2^2\|\nabla^2d\|_2\|\nabla^3d\|_2+\|\nabla^2 d\|_2^2\|\nabla u\|_2\|\nabla^2 u\|_2\nonumber\\
&+\|\nabla^2 d\|_2^2\|\nabla d\|_2^2\|\nabla^3 d\|_2^2+\|\nabla^2 d\|_2^2\|\nabla^2d\|_2\|\nabla^3d\|_2),\label{M.4}
\end{align}
and thus, it follows from Lemma \ref{lemE1} that
\begin{align*}
\int_0^t\|\nabla d_t\|_2^2ds
\leq&C\int_0^t(\|\nabla^3d\|_2^2+\|\nabla^2u\|_2^2+\|\nabla u\|_2^4\|\nabla^2d\|_2^2\\
&+\|\nabla^2d\|_2^4\|\nabla u\|_2^2+\|\nabla^2 d\|_2^2\|\nabla^3 d\|_2^2)ds\\
\leq&C(1+\|\nabla u_0\|_2^2+\|\nabla^2d_0\|_2^2)^3,
\end{align*}
which, combined with (\ref{M.3}), gives
\begin{align}
&\sup_{0\leq s\leq t}(\|\sqrt\rho u_t\|_2^2+\|\nabla d_t\|_2^2)+\int_0^t(\|\nabla u_t\|_2^2+\|d_{tt}\|_2^2+\|\nabla^2d_t\|_2^2)ds\nonumber\\
\leq&(\|\sqrt{\rho_0}u_t(0)\|_2^2+\|\nabla d_t(0)\|_2^2)+C(1+\|\nabla u_0\|_2^2+\|\nabla^2d_0\|_2^2)^4.\label{M.5}
\end{align}
The compatible condition and (\ref{M.4}) implies
$$
\|\sqrt{\rho_0}u_t(0)\|_2^2+\|\nabla d_t(0)\|_2^2\leq C[\|g_0\|_2^2+(1+\|\nabla u_0\|_2^2+\|\nabla d_0\|_{H^2}^2)^2],
$$
from which, recalling (\ref{M.5}), we arrive
\begin{align}
&\sup_{0\leq s\leq t}(\|\sqrt\rho u_t\|_2^2+\|\nabla d_t\|_2^2)+\int_0^t(\|\nabla u_t\|_2^2+\|d_{tt}\|_2^2+\|\nabla^2d_t\|_2^2)ds\nonumber\\
\leq& C[\|g_0\|_2^2+(1+\|\nabla u_0\|_2^2+\|\nabla d_0\|_{H^2}^2)^4].\label{M.5}
\end{align}

By elliptic estimates to Stokes equations and elliptic equations, recalling (\ref{M.4}), it follows from equations (\ref{1.2}) and (\ref{1.4}) that
\begin{align*}
\|\nabla^3d\|_2^2\leq&C\|\nabla d_t\|_2^2+\varepsilon(\|\nabla^2u\|_2^2+\|\nabla^3d\|_2^2)+C(\|\nabla u\|_2^6+\|\nabla^2d\|_2^6)
\end{align*}
and
\begin{align*}
\|\nabla^2u\|_2^2+\|\nabla p\|_2^2\leq&C\|\sqrt\rho u_t\|_2^2+C(\||\nabla d|\nabla^2d\|_2^2+\|\rho u|\nabla u|\|_2^2)\\
\leq&C\|\sqrt\rho u_t\|_2^2+C(\|\nabla d\|_6^2\|\nabla^2d\|_2\|\nabla^3d\|_2+\|u\|_6^2\|\nabla u\|_2\|\nabla^2u\|_2)\\
\leq&C\|\sqrt\rho u_t\|_2^2+\varepsilon(\|\nabla^2u\|_2^2+\|\nabla^3d\|_2^2)+C(\|\nabla^2d\|_2^6+\|\nabla u\|_2^6),
\end{align*}
and thus
$$
\|\nabla^2u\|_2^2+\|\nabla p\|_2^2+\|\nabla^3d\|_2^2\leq C(\|\sqrt\rho u_t\|_2^2+\|\nabla d_t\|_2^2)+C(\|\nabla^2d\|_2^6+\|\nabla u\|_2^6).
$$
On account of this inequality, it follows from (\ref{M.5}) and Lemma \ref{lemE1} that
\begin{align*}
&\sup_{0\leq s\leq t}(\|\sqrt\rho u_t\|_2^2+\|\nabla d_t\|_2^2+\|\nabla^2u\|_2^2+\|\nabla p\|_2^2+\|\nabla^3d\|_2^2)\\
+&\int_0^t(\|\nabla u_t\|_2^2+\|d_{tt}\|_2^2+\|\nabla^2d_t\|_2^2)ds\leq C[\|g_0\|_2^2+(1+\|\nabla u_0\|_2^2+\|\nabla d_0\|_{H^2}^2)^4],
\end{align*}
completing the proof.
\end{proof}

\begin{proposition}\label{propapriori}
Let $(\rho, u, d, p)$ be a strong solution to the system (\ref{1.1})--(\ref{1.6}) on $Q_T$, and set
$$
E(t)=\sup_{0\leq s\leq t}(\|\nabla\rho\|_2^2+\|\rho_t\|_2^2+\|u\|_{H^2}^2+\|\nabla d\|_{H^2}^2+\|\nabla p\|_2^2)+\int_0^t(\|\nabla u_t\|_2^2+\|\nabla^2d_t\|_2^2)ds.
$$
Then there is a positive constant $\varepsilon_0$ depending only on $\overline\rho$ and $\Omega$, such that
$$
E(t)\leq C(\|\rho_0\|_{H^1}, \|u_0\|_{H^2},\|\nabla d_0\|_{H^2}, \|g_0\|_2, \overline\rho, \Omega)
$$
for any $t\in(0, T)$, provided $\|\sqrt{\rho_0}u_0\|_2^2+\|\nabla d_0\|_2^2\leq\varepsilon_0$ if $N=2$, and $(\|\sqrt{\rho_0}u_0\|_2^2+\|\nabla d_0\|_2^2)(\|\nabla u_0\|_2^2+\|\nabla^2d_0\|_2^2)\leq\varepsilon_0$ if $N=3$.
\end{proposition}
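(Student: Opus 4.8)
\noindent\emph{Proof plan.} The plan is to read off the bounds on $u$, $d$ and $p$ directly from Lemmas \ref{lemL.0}, \ref{lemE1} and \ref{lemE2}, and to bootstrap the bounds on $\rho$ from the transport estimate of Lemma \ref{est on transport}; only the latter requires genuinely new input.

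For the terms in $E(t)$ not involving $\rho$: recall $0\le\rho\le\bar\rho$ on $Q_T$. Since $u\in H^1_0(\Omega)$ and $d-d_0^*\in H^1_0(\Omega)$, Poincar\'e's inequality gives $\|u\|_{H^2}^2\le C(\|\nabla^2u\|_2^2+\|\nabla u\|_2^2)$ and $\|\nabla d\|_{H^2}^2\le C(\|\nabla^3d\|_2^2+\|\nabla^2d\|_2^2+\|\nabla d\|_2^2)$, so that
$$
\sup_{0\le s\le t}\big(\|u\|_{H^2}^2+\|\nabla d\|_{H^2}^2+\|\nabla p\|_2^2\big)+\int_0^t\big(\|\nabla u_t\|_2^2+\|\nabla^2d_t\|_2^2\big)\,ds\le C\big(C_0+E_1(t)+E_2(t)\big).
$$
By Lemma \ref{lemL.0}, $C_0$ is controlled by the data; by Lemmas \ref{lemE1} and \ref{lemE2}, $E_1(t)+E_2(t)\le C$ with $C$ depending only on $\|u_0\|_{H^2}$, $\|\nabla d_0\|_{H^2}$, $\|g_0\|_2$, $\bar\rho$ and $\Omega$, provided $\varepsilon_0$ is as in those lemmas. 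This settles everything in $E(t)$ except $\sup_{0\le s\le t}(\|\nabla\rho\|_2^2+\|\rho_t\|_2^2)$.

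For the density, Lemma \ref{est on transport} gives $\|\nabla\rho(t)\|_2\le\|\nabla\rho_0\|_2\exp\big(\int_0^t\|\nabla u\|_\infty\,ds\big)$, and $\mathrm{div}\,u=0$ yields $\rho_t=-u\cdot\nabla\rho$, whence $\|\rho_t\|_2\le\|u\|_\infty\|\nabla\rho\|_2\le C\|u\|_{H^2}\|\nabla\rho\|_2$; everything therefore reduces to $\int_0^\infty\|\nabla u\|_\infty\,ds\le C$. I would estimate $\|\nabla u\|_\infty\le C\|u\|_{W^{2,6}}$ (Sobolev embedding, valid for $N=2,3$) and apply the Stokes estimate of Lemma \ref{est on stok} to (\ref{1.2-1}):
$$
\|u\|_{W^{2,6}}\le C\big(\|\rho u_t\|_6+\|\rho u\cdot\nabla u\|_6+\||\nabla d|\,|\nabla^2d|\|_6\big)\le C\big(\|\nabla u_t\|_2+\|u\|_{H^2}^2+\|\nabla d\|_{H^2}^2\big),
$$
using $\|\rho u_t\|_6\le\bar\rho\|u_t\|_6\le C\|\nabla u_t\|_2$, $\|\rho u\cdot\nabla u\|_6\le C\|u\|_\infty\|\nabla u\|_6\le C\|u\|_{H^2}^2$ and $\||\nabla d|\,|\nabla^2d|\|_6\le C\|\nabla d\|_\infty\|\nabla^2d\|_6\le C\|\nabla d\|_{H^2}^2$. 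The integrals $\int_0^t(\|u\|_{H^2}^2+\|\nabla d\|_{H^2}^2)\,ds$ are bounded uniformly in $t$, because $\int_0^\infty\|\nabla u\|_2^2\,ds\le C_0$ (Lemma \ref{lemL.0}), $\int_0^\infty\|\nabla^2d\|_2^2\,ds\le CC_0$ (from the proof of Lemma \ref{lemE1}), and $\int_0^\infty(\|\nabla^2u\|_2^2+\|\nabla^3d\|_2^2)\,ds\le E_1(t)\le C$.

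The one remaining term, $\int_0^t\|\nabla u_t\|_2\,ds$, is the main obstacle. Lemma \ref{lemE2} only gives $\int_0^\infty\|\nabla u_t\|_2^2\,ds\le C$, so Cauchy--Schwarz yields merely $\int_0^t\|\nabla u_t\|_2\,ds\le C\,t^{1/2}$; to remove this $t$-dependence I would exploit the smallness of the data to produce exponential decay of the solution. From Lemma \ref{lemL.0}, Poincar\'e's inequality, and the bound $\|\nabla d\|_4^4\le\frac12\|\Delta d\|_2^2$ (valid for $\varepsilon_0$ small, by Ladyzhenskaya/Gagliardo--Nirenberg together with $\sup_s\|\nabla^2d\|_2^2\le E_1(t)$ and the smallness assumption, exactly as in the proof of Lemma \ref{lemE1}), one obtains $\frac{d}{dt}(\|\sqrt\rho u\|_2^2+\|\nabla d\|_2^2)+\sigma(\|\sqrt\rho u\|_2^2+\|\nabla d\|_2^2)\le0$ for some $\sigma=\sigma(\bar\rho,\Omega)>0$, hence $\|\sqrt\rho u\|_2^2+\|\nabla d\|_2^2\le C_0e^{-\sigma t}$. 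Re-running the differential inequalities behind Lemmas \ref{lemL.1}, \ref{lemE1} and \ref{lemE2} with an exponential weight $e^{\sigma' t}$, $0<\sigma'<\sigma$, then carries this decay up to $\|\nabla u\|_2^2+\|\nabla^2d\|_2^2$, to $\|\sqrt\rho u_t\|_2^2$, and to $\int_0^\infty e^{\sigma' s}\|\nabla u_t\|_2^2\,ds$, which gives $\int_0^\infty\|\nabla u_t\|_2\,ds\le C$ and closes the argument; together with the transport estimate and $\rho_t=-u\cdot\nabla\rho$ this yields the bound on $\sup_{0\le s\le t}(\|\nabla\rho\|_2^2+\|\rho_t\|_2^2)$. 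Thus, beyond assembling Lemmas \ref{lemL.0}, \ref{lemE1} and \ref{lemE2}, the real content is the extraction of exponential decay of the basic energy and its transport through the higher-order estimates, which is precisely what makes the density bound—and hence $E(t)$—independent of $T$.
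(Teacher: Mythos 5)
Your assembly of the bounds on $u$, $d$ and $p$ from Lemmas \ref{lemL.0}, \ref{lemE1} and \ref{lemE2}, and your reduction of the density bound to the time-integrability of $\|\nabla u\|_{\infty}$ via Lemma \ref{est on transport} and $\rho_t=-u\cdot\nabla\rho$, match the paper. From there you diverge. The paper does \emph{not} try to prove $\int_0^\infty\|\nabla u\|_\infty\,ds<\infty$: it applies the Stokes estimate in $L^4$ (rather than your $L^6$) to get $\|\nabla^2u\|_4^2\leq C(1+\|\nabla u_t\|_2^2)$, hence $\int_0^t\|\nabla^2u\|_4^2\,ds\leq C$ by Lemma \ref{lemE2}, i.e.\ $\nabla u\in L^2(0,t;L^\infty)$ by $W^{1,4}\hookrightarrow L^\infty$ ($4>N$), and feeds this into the transport estimate. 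That only controls $\int_0^t\|\nabla u\|_\infty\,ds$ by $C\sqrt t$, so the resulting density bound grows in $t$ --- which is consistent with Theorem \ref{theorem1} asserting only $\rho\in L^\infty_{loc}([0,\infty);H^1)$ and $\rho_t\in L^\infty_{loc}([0,\infty);L^2)$, and is all that is needed to continue the local solution. You have therefore set yourself a strictly harder problem (a time-uniform density bound) than the one the paper actually solves.

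The genuine gap is that your solution of that harder problem is a program, not a proof. The exponential decay of the basic energy itself is fine: under the smallness assumptions one does get $\|\nabla d\|_4^4\leq\tfrac12\|\Delta d\|_2^2$ and then $\|\sqrt\rho u\|_2^2+\|\nabla d\|_2^2\leq C_0e^{-\sigma t}$ by Poincar\'e. But ``re-running the differential inequalities behind Lemmas \ref{lemL.1}, \ref{lemE1} and \ref{lemE2} with weight $e^{\sigma' t}$'' amounts to redoing essentially all of Section \ref{sec3}: those inequalities are not single-quantity Gronwall inequalities --- they involve suprema over $[0,t]$, products of time integrals, boundary terms, and absorptions that depend on smallness --- and each must be re-verified with the weight (for instance the new term $\sigma'\int_0^te^{\sigma's}\|\nabla u\|_2^2\,ds$ must be controlled from the decay of the basic energy, the quadratic term $CC_0E_1(t)^2$ re-absorbed, the logarithmic Sobolev step for $N=2$ redone, and so on). None of this is carried out, so the one estimate your argument actually hinges on, $\int_0^\infty\|\nabla u_t\|_2\,ds\leq C$, is not established. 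Either execute the weighted estimates in full, or take the paper's much shorter route: derive $\int_0^t\|\nabla^2u\|_4^2\,ds\leq C$ from the $L^4$ Stokes estimate and accept a density bound that is uniform only on bounded time intervals.
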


\begin{proof}
Using Sobolev embedding inequality and Gagliado-Nirenberg inequlity and applying elliptic estimates on Stokes equations, it follows from Lemma \ref{lemE1} and Lemma \ref{lemE2} that
\begin{align*}
\|\nabla^2u\|_4^2\leq& C(\|\rho u_t\|_4^2+\|\rho u|\nabla u|\|_4^2+\|\nabla d|\nabla^2d|\|_4^2)\\
\leq&C(\|\nabla u_t\|_2^2+\|u\|_8^2\|\nabla u\|_8^2+\|\nabla d\|_8^2\|\nabla^2d\|_8^2)\\
\leq&C(\|\nabla u_t\|_2^2+\|\nabla u\|_2^2\|\nabla^2u\|_2^2+\|\nabla^2d\|_2^2\|\nabla^3d\|_2^2)\\
\leq&C(\|\nabla u_t\|_2^2+1),
\end{align*}
if $N=2$, and
\begin{align*}
\|\nabla^2u\|_4^2\leq& C(\|\rho u_t\|_4^2+\|\rho u|\nabla u|\|_4^2+\|\nabla d|\nabla^2d|\|_4^2)\\
\leq&C(\|\nabla u_t\|_2^2+\|u\|_\infty^2\|\nabla u\|_4^2+\|\nabla d\|_{12}^2\|\nabla^2d\|_6^2)\\
\leq&C(\|\nabla u_t\|_2^2+\|\nabla^2 u\|_2^2\|\nabla u\|_2^{14/13}\|\nabla^2u\|_4^{12/13}+\|\nabla^3d\|_2^2\|\nabla^3d\|_2^2)\\
\leq&\frac{1}{2}\|\nabla^2u\|_2^2+C\|\nabla u_t\|_2^2+C,
\end{align*}
if $N=3$. As a consequence, it follows from Lemma \ref{lemE2} that $\int_0^t\|\nabla^2u\|_4^2ds\leq C.$ On account of this inequality, it follows from Lemma \ref{est on transport} that $\|\nabla\rho(t)\|_2^2\leq C\|\nabla\rho_0\|_2^2$. Using equation (\ref{1.1}), by Sobolev embedding inequality and Lemma \ref{lemE2}, we deduce
$$
\|\rho_t\|_2^2\leq C\|u\|_\infty^2\|\nabla\rho\|_2^2\leq C\|\nabla^2u\|_2^2\|\nabla\rho\|_2^2\leq C.
$$
The proof is complete.
\end{proof}

\section{Proof of the main theorem}\label{sec4}

After obtaining the a priori estimates stated in the previous section, we can now give the proof of our main results.

\begin{proof}
We divide the proof into several steps. Let $\varepsilon_0$ be the constant stated in Proposition \ref{propapriori} and suppose that
\begin{eqnarray*}
&&\|\sqrt{\rho_0}u_0\|_2^2+\|\nabla d_0\|_2^2<\varepsilon_0\mbox{ if }N=2,\\
&&(\|\sqrt{\rho_0}u_0\|_2^2+\|\nabla d_0\|_2^2)(\|\nabla u_0\|_2^2+\|\nabla^2d_0\|_2^2)<\varepsilon_0 \mbox{ if }N=3.
\end{eqnarray*}

Step 1. Global existence of strong solutions without vacuum. Suppose that there is a positive constant $\underline\rho$ such that $\rho_0(x)\geq\underline\rho$ for all $x\in\Omega$. By Lemma \ref{local existence}, there is
a unique strong solution to system (\ref{1.1})--(\ref{1.6}) in $Q_T$. Let $T_*$ be the maximal existence time. We claim that $T_*=\infty$. If it's not the case,
then $0<T_*<\infty$. By Proposition \ref{propapriori}, there is a positive constant $K$, such that
\begin{align}
\sup_{0\leq t\leq T}&(\|\nabla d\|_{H^2}^2+\|\nabla p\|_{2}^2+\|u\|_{H^2}^2+\|\nabla\rho \|_2^2+\|\rho_t\|_2^2)+\int_0^T(\|d_t\|_{H^2}^2+\|u_t\|_{H^1}^2)ds\leq K
\label{4.1}
\end{align}
for some suitable constant $K$ depending only on $\|g_0\|_2$, $\|u_0\|_{H^2}$, $\|\nabla d_0\|_{H^2}$, $\bar\rho$ and $\Omega$.
Since $\rho_0(x)\geq\underline\rho>0$ for all $x\in\Omega$, Lemma \ref{est on transport} gives $\rho(x,t)\geq\underline\rho$ for all $(x,t)\in Q_{T_*}$, and thus
$$
\sup_{0< t<T_*}\|\rho^{-1/2}(-\Delta u-\textmd{div}(\nabla d\odot\nabla d))\|_2^2\leq C
$$
for some constant $C$. Using Lemma \ref{local existence}, we can extend the strong solution to $(0,T^*)$ with some $T^*>T_*$, which contradicts to the definition of $T_*$. This contradiction implies that $T_*=\infty$, and thus we obtain the global strong solutions for system (\ref{1.1})--(\ref{1.6}) with initial density being away from vacuum. This completes the proof of step 1.

Step 2. Global existence of strong solutions with vacuum. We prove it by approximation. For any $j=1,2,\cdots,$ define $\rho_0^{j}=\rho_0+\frac{1}{j}$ and
$$
g_0^j=(\rho_0^j)^{-1/2}(-\Delta u_0-\nabla p_0-\textmd{div}(\nabla d_0\odot\nabla d_0)),
$$
then the compatible condition holds true
$$
-\Delta u_0-\nabla p_0-\textmd{div}(\nabla d_0\odot\nabla d_0)=\sqrt{\rho_0^j}g_0^j
$$
in $\Omega$.
It's easy to see that $|g_0^j|\leq|g_0|$ in $\Omega$ and thus $\|g_0^j\|_2^2\leq\|g_0\|_2^2$ for all $j$.
Then for $j$ large enough it has
\begin{eqnarray*}
&&\|\sqrt{\rho_0^j}u_0\|_2^2+\|\nabla d_0\|_2^2<\varepsilon_0\mbox{ if }N=2,\\
&&(\|\sqrt{\rho_0^j}u_0\|_2^2+\|\nabla d_0\|_2^2)(\|\nabla u_0\|_2^2+\|\nabla^2d_0\|_2^2)<\varepsilon_0 \mbox{ if }N=3.
\end{eqnarray*}

Using the result we have proven in step 1, for each $j$ large enough, there is a global strong solution
$(\rho^j,u^j,p^j,d^j)$ to system (\ref{1.1})--(\ref{1.6}) with initial data $\rho^j(0)=\rho_0^j$ and $u^j(0)=u_0$. Moreover, by Proposition \ref{propapriori}, $(\rho^j,u^j,p^j,d^j)$ satisfies the following estimates
\begin{align*}
\sup_{0\leq t\leq T}&(\|\nabla d\|_{H^2}^2+\|\nabla p\|_{2}^2+\|u\|_{H^2}^2+\|\nabla\rho \|_2^2+\|\rho_t\|_2^2)+\int_0^T(\|d_t\|_{H^2}^2+\|u_t\|_{H^1}^2)ds\leq K.
\end{align*}
By the aid of these uniform estimates with respect to $j$, we can follow the standard convergence approach to obtain the global strong solution to system (\ref{1.1})--(\ref{1.6}) satisfying the regularities and estimates stated in Theorem \ref{theorem1}. Since it's quite a standard approach, we omit it here.

Step 3. Uniqueness of global strong solutions.
Let $(\rho,u,p,d)$ and $(\hat\rho,\hat u,\hat p,\hat d)$ be two global strong solutions to system (\ref{1.1}) to (\ref{1.4}) satisfying the regularities and
estimates stated in Theorem \ref{theorem1}. We only consider the three dimensional case, the two dimensional case can be dealt with in the similar way. Using the identity
\begin{equation*}
(\rho-\hat\rho)_t+u\cdot\nabla(\rho-\hat\rho)=(\hat u-u)\cdot\nabla\hat\rho,
\end{equation*}
we can deduce that
\begin{equation*}
(|\rho-\hat\rho|^{3/2})_t+u\cdot\nabla(|\rho-\hat\rho|^{3/2})\leq\frac{3}{2}|u-\hat u||\nabla\hat\rho||\rho-\hat\rho|^{1/2}.
\end{equation*}
Integrating over $\Omega$, we deduce from Sobolev embedding inequality that
\begin{align*}
\frac{d}{dt}\int_\Omega|\rho-\hat\rho|^{3/2}dx\leq&\frac{3}{2}\int_\Omega|\hat u-u||\nabla\hat\rho||\rho-\hat\rho|^{1/2}dx\\
\leq&\frac{3}{2}\|u-\hat u\|_6\|\nabla\hat\rho\|_2\|\rho-\hat\rho\|_{3/2}^{1/2}\leq C\|\nabla(u-\hat u)\|_2\|\nabla\hat\rho\|_2\|\rho-\hat\rho\|_{3/2}^{1/2}
\end{align*}
and thus
\begin{align}
\frac{d}{dt}\|\rho-\hat\rho\|_{3/2}^2=&\frac{4}{3}\|\rho-\hat\rho\|_{3/2}^{1/2}\frac{d}{dt}\|\rho-\hat\rho\|_{3/2}^{3/2}\nonumber\\
\leq& C\|\nabla(\hat u-u)\|_2\|\nabla\hat\rho\|_2\|\rho-\hat\rho\|_{3/2}\nonumber\\
\leq&\frac{1}{4}\|\nabla(u-\hat u)\|_2^2+\alpha(t)\|\rho-\hat\rho\|_{3/2}^2\label{49}
\end{align}
for some nonnegative function $\alpha(t)\in L^1(0,T)$ for any $T>0$.
Using equation (\ref{1.2}), we deduce
\begin{align*}
&\rho(u-\hat u)_t+\rho u\cdot\nabla(u-\hat u)-\Delta(u-\hat u)+\nabla(p-\hat p)\nonumber\\
=&\Delta(\hat d-d)\cdot\nabla\hat d+\Delta d\cdot\nabla(\hat d-d)-(\rho-\hat\rho)(\hat u_t+\hat u\cdot\nabla\hat u)-\rho(u-\hat u)\cdot\nabla\hat u.
\end{align*}
Multiplying the above equation by $u-\hat u$ and integration by parts yields
\begin{align*}
&\frac{1}{2}\frac{d}{dt}\int_\Omega\rho|u-\hat u|^2dx+\int_\Omega|\nabla(u-\hat u)|^2dx\\
\leq&\int_\Omega(|\nabla(d-\hat d)||\nabla\hat d||\nabla(u-\hat u)|+|\nabla(d-\hat d)||\nabla^2\hat d||u-\hat u|+|\Delta d||\nabla(\hat d-d)||u-\hat u|\\
&+|\rho-\hat\rho||\hat u_t+u\cdot\nabla\hat u||u-\hat u|+\rho|\nabla\hat u||u-\hat u|^2)dx,
\end{align*}
consequently, it follows from H\"older inequality, Sobolev embedding inequality and Young inequality that
\begin{align*}
&\frac{d}{dt}\|\sqrt\rho(u-\hat u)\|_2^2+\|\nabla(u-\hat u)\|_2^2\\
\leq&\frac{1}{4}\|\nabla(u-\hat u)\|_2^2+C\|\nabla\hat d\|_\infty^2\|\nabla(d-\hat d)\|_2^2\\
&+C\Big(\|\nabla^2\hat d\|_3\|\nabla(\hat d-d)\|_2\|u-\hat u\|_6+\|\Delta d\|_3\|\nabla(d-\hat d)\|_2\|u-\hat u\|_6\\
&+\|\rho-\hat\rho\|_{3/2}\|\hat u_t+u\cdot\nabla\hat u\|_6\|u-\hat u\|_6+\|\nabla\hat u\|_\infty\|\sqrt\rho(u-\hat u)\|_2^2\Big)\\
\leq&\frac{1}{2}\|\nabla(u-\hat u)\|_2^2+C\Big[(\|\nabla\hat d\|_\infty^2+\|\nabla^2\hat d\|_3^2+\|\Delta d\|_3^2)\|\nabla(d-\hat d)\|_2^2\\
&+\|\hat u_t+u\cdot\nabla\hat u\|_6^2\|\rho-\hat\rho\|_{3/2}^2+\|\nabla\hat u\|_\infty\|\sqrt\rho(u-\hat u)\|_2^2\Big],
\end{align*}
from which we deduce
\begin{align*}
&\frac{d}{dt}\|\sqrt\rho(u-\hat u)\|_2^2+\|\nabla(u-\hat u)\|_2^2\\
\leq&\beta(t)(\|\nabla(d-\hat d)\|_2^2
+\|\rho-\hat\rho\|_{3/2}^2+\|\sqrt\rho(u-\hat u)\|_2^2),
\end{align*}
for some nonnegative function $\beta(t)\in L^1(0,T)$ for any $T>0$.
Combining the above inequality with (\ref{49}), we obtain
\begin{align}
&\frac{d}{dt}\Big(\|\rho-\hat\rho\|_{3/2}^2+\|\sqrt\rho(u-\hat u)\|_2^2\Big)+\|\nabla(u-\hat u)\|_2^2 \nonumber\\
\leq&\gamma(t)(\|\nabla(d-\hat d)\|_2^2+\|\rho-\hat\rho\|_{3/2}^2+\|\sqrt\rho(u-\hat u)\|_2^2),\label{50}
\end{align}
for some nonnegative function $\gamma(t)\in L^1(0,T)$ for any $T>0$. Using (\ref{1.4}), it has
\begin{align*}
&(d-\hat d)_t+u\cdot\nabla(d-\hat d)-\Delta(d-\hat d)\nonumber\\
=&\nabla(d-\hat d):\nabla(d+\hat d)d+|\nabla\hat d|^2(d-\hat d)-(u-\hat u)\cdot\nabla\hat d.
\end{align*}
Multiply the above equation by $-\Delta(d-\hat d)$ and integration by parts, we obtain
\begin{align*}
&\frac{1}{2}\frac{d}{dt}\int_\Omega|\nabla(d-\hat d)|^2dx+\int_\Omega|\Delta(d-\hat d)|^2dx\\
=&\int_\Omega\Big[u\cdot\nabla(d-\hat d)+(u-\hat u)\cdot\nabla\hat d-\nabla(d-\hat d):\nabla(d+\hat d)d-|\nabla\hat d|^2(d-\hat d)\Big]\cdot\Delta(d-\hat d)dx\\
\leq&\frac{1}{2}\int_\Omega|\Delta(d-\hat d)|^2dx+\int_\Omega\Big[(|u|^2+|\nabla(d+\hat d)|^2)|\nabla(d-\hat d)|^2+|\nabla \hat d|^4|d-\hat d|^2+|\nabla\hat d
|^2|u-\hat u|^2\Big]dx.
\end{align*}
It follows from Poinc\'are inequality and the above inequality that
\begin{align*}
\frac{d}{dt}\|\nabla(d-\hat d)\|_2^2+\|\Delta(d-\hat d)\|_2^2\leq\zeta(t)(\|\nabla(d-\hat d)\|_2^2+\|\nabla(u-\hat u)\|_2^2),
\end{align*}
for some nonnegative function $\zeta(t)\in L^1(0,T)$ for any $T>0$. Combining the above inequality with (\ref{50}), we have
\begin{align*}
&\frac{d}{dt}\Big(\|\rho-\hat\rho\|_{3/2}^2+\|\sqrt\rho(u-\hat u)\|_2^2+\|\nabla(d-\hat d)\|_2^2\Big)+(\|\nabla(u-\hat u)\|_2^2+\|\Delta(d-\hat d)\|_2^2) \nonumber\\
\leq&\xi(t)(\|\rho-\hat\rho\|_{3/2}^2+\|\sqrt\rho(u-\hat u)\|_2^2+\|\nabla(d-\hat d)\|_2^2),
\end{align*}
for some nonnegative function $\xi(t)\in L^1(0,T)$ for any $T>0$.
Recalling that $(\rho-\hat\rho,u-\hat u,d-\hat d)|_{t=0}=(0,0,0)$, it follows from Gronwall's inequality that $(\rho,u,d)=(\hat\rho,\hat u,\hat d)$, and consequently
$p=\hat p$, completing the proof.
\end{proof}


\begin{thebibliography}{50}


\bibitem{E1} J.~Ericksen,
\emph{Conservation laws for liquid crystals}, Trans. Soc. Rheol,
\bf5\rm(1961), 22--34.

\bibitem{E2} J.~Ericksen,
\emph{Continuum theory of nematic liquid crystals}, Res. Mechanica,
\bf21\rm(1987), 381--392.

\bibitem{LE} F.~Leslie,
\emph{Some constitutive equations for liquid crystals},
Arch. Rational Mech. Anal., \bf28(4)\rm(1968), 265--283.

\bibitem{LL1} F.~H.~Lin and C.~Liu
\emph{Nonparabolic dissipative systems modeling the flow of liquid
crystals}, Comm. Pure Appl. Math., \bf48\rm(1995), 501--537.

\bibitem{GP} P.~G.~de Gennes and J.~Prost,
\emph{The physics of liquid crystals}, New York: Oxford University
Press, 1992.

\bibitem{CH} S.~Chandrasekhar,
\emph{Liquid crystals}, 2nd ed., Cambridge: Cambrige University
Press, 1992.

\bibitem{LL2} F.~H.~Lin and C.~Liu,
\emph{Partial regularity of the nonlinear dissipative system
modeling the flow of liquid crystals}, Discrete Comtin. Dyn. Syst.,
\bf2\rm(1996), 1--23.

\bibitem{CKN} L.~Caffarelli, R.~Kohn and L.~Nirenberg,
\emph{Partial regularity of suitable weak solutions of Navier-Stokes
equations}, Comm. Pure. Appl. Math, \bf35\rm(1982), 771--831.

\bibitem{Wu} H.~Wu,
\emph{Long-time behavior for nonlinear hydrodynamic system modeling
the nematic liquid crystal flows}, Discrete Contin. Dyn. Syst.,
\bf26(1)\rm(2010), 379--396.

\bibitem{Hong} M.~C.~Hong,
\emph{Global existence of solutions of the simplified
Ericksen-Leslie system in dimension two}, Cal. Var.,
\bf40\rm(20011), 15--36.

\bibitem{LLW} F.~H.~Lin, J.~Y.~Lin and C.~Y.~Wang,
\emph{Liquid crystal flow in two dimensions}, Arch. Rational Mech.
Anal., \bf197\rm(2010), 297--336.

\bibitem{LW} F.~H.~Lin and C.~Y.~Wang,
\emph{On the uniqueness of heat flow of harmonic maps and
hydrodynamic of flow of nematic liquid crystals}, Chin. Ann. Math.,
\bf31B(6)\rm(2010), 921--938.

\bibitem{Tan1} F.~Jiang and Z.~Tan,
\emph{Global weak solution to the flow of liquid crystal system},
Math. Meth. Appl. Sci., \bf32\rm(2009), 2243--2266.

\bibitem{Tan2} J.~K.~Xu and Z.~Tan,
\emph{Global existence of the finite energy weak solution to a
nematic liquid crystal model}, Math. Meth. Appl. Sci.,
\bf34\rm(2011), 929--938.

\bibitem{LiuX} X.~G.~Liu and Z.~Y.~Zhang,
\emph{Existence of the flow of liquid crystal system}, Chin. Ann.
Math., \bf30A\rm(2009), 1--20.



\bibitem{WD} H.~Y.~Wen and S.~J.~Ding,
\emph{Solution of incompressible hydrodynamic flow of liquid
crystals}, Nonlinear Analy., \bf12\rm(2011), 1510--1531.

\bibitem{LIXL1}
X.~L.~Li and D.~H.~Wang, \emph{Global solution to the incompressible flow of liquid crystals}, J. Differential Equations, \bf252\rm(2012), 745--767.

\bibitem{LIX2}
X.~L.~Li and D.~H.~Wang, \emph{Global strong solution to the density-dependent incompressible flow of liquid crystals}, arXiv:1202.1011v1.

\bibitem{DHXIA}
S.~J.~Ding, J.~R.~Huang and F.~G.~Xia, \emph{A remark on global existence of strong solution for incompressible hydrodynamic flow of liquid crystals with vacuum}, preprint.

\bibitem{LIJ}
J.~K.~Li, \emph{Global strong and weak solutions to nematic liquid crystal flow in tro dimensions}, arXiv:1211.0131.


\bibitem{Chang} K.~C.~Chang, W.~Y.~Ding and R.~Ye,
\emph{Finite-time blow-up of the heat flow of harmonic maps from
surfaces}, J. Differential Geom., \bf36(2)\rm(1992), 507--515.


\bibitem{Lad} O.~Ladyzhenskaya and V.~Solonnikov,
\emph{The unique solvability of an initial-boundary value problem
for viscous incompressible inhomogeneous fluids}, Journal of Soviet Mathematics, \bf9\rm(1978), 697--749.


\bibitem{Krylov} N.~V.~Krylov,
\emph{Lectures on elliptic and parabolic equations in Sobolev spaces}, Graduate Studies in Mathematics, 96. American Mathematical Society, Providence, RI, 2008.


\bibitem{Galdi} G.~Galdi,
\emph{An introduction to the mathematical theory of the Navier-Stokes equations. Vol.I Linearlized steady
problems}, Springer-Verlag, revised edition, 1994.

\bibitem{HUANGWANG}
X.~D.~Huang and Y.~Wang, \emph{Global Strong solution with vacuum to the 2D nonhomogeneous incompressible MHD system}, J. Differential Equations, \bf254(2)\rm(2013), 511--527.



\end{thebibliography}
\end{document}